\documentclass[12pt]{amsart}

\headheight=6.15pt
\textheight=8.75in
\textwidth=6.5in
\oddsidemargin=0in
\evensidemargin=0in
\topmargin=0in

\usepackage{amsmath, graphicx, cite, enumerate, comment}
\usepackage{amssymb, amsthm, amscd}
\usepackage{latexsym}
\usepackage[T1]{fontenc}	
\usepackage[english]{babel}
\usepackage{amsfonts}
\usepackage{amscd}
\usepackage{cite}
\usepackage{marvosym}
\usepackage{mathrsfs}
\usepackage{amsthm}
\usepackage{verbatim}

\newtheorem{theorem}{Theorem}[section]

\newtheorem{lemma}[theorem]{Lemma}
\newtheorem{proposition}[theorem]{Proposition}

\theoremstyle{definition}
\newtheorem{definition}[theorem]{Definition}

\theoremstyle{remark}
\newtheorem{remark}[theorem]{Remark}

\numberwithin{equation}{section}

\newcommand{\R}{\mathbb{R}}

\newcommand{\g}{\gamma}

\begin{document}
\date{}
\title[Minimal hyperspheres of arbitrarily large Morse index]
{Minimal hyperspheres of arbitrarily large Morse index}
\author{Alessandro Carlotto}
\address{Department of Pure Mathematics \\
                 Imperial College \\
                 London, SW7 2AZ}
\email{a.carlotto@imperial.ac.uk}

\begin{abstract} We show that the Morse index of a closed minimal hypersurface in a four-dimensional Riemannian manifold cannot be bound in terms of the volume and the topological invariants of the hypersurface itself by presenting a method for constructing Riemannian metrics on $S^4$ that admit embedded minimal hyperspheres of uniformly bounded volume and arbitrarily large Morse index.
The phenomena we exhibit are in striking contrast with the three-dimensional compactness results by Choi-Schoen.	
	 \end{abstract}

\maketitle

\section{Introduction}\label{sec:intro}
In 1970, during his plenary address entitled \textsl{Differential Geometry: its past and its future} \cite{Che70} at the International Congress of Mathematicians held in Nice, S. S. Chern asked the following question:

\

\begin{center}
\textsl{Is it true that an embedded, minimal hypersphere inside the Euclidean $n$-sphere is necessarily an equator?}
\end{center}

\

We shall recall here that at the time there were good reasons to believe the answer to this question had to be affirmative for any dimension, since just a few years earlier F. Almgren had proven such a rigidity statement for $n=3$ \cite{Alm66}, which of course extends the $n=2$ case that amounts to a trivial ODE uniqueness argument. It was therefore quite a surprise for the mathematical community when W. Hsiang \cite{Hsi83a} answered Chern's question in the negative for $n=4, 5, 6$ by constructing (in each of those cases) a sequence $\Sigma_{k}$ of embedded, minimal hyperspheres that were not totally geodesic. This was later extended to $n=7, 8, 10, 12, 14$ in \cite{Hsi83b} and to all even dimensions $n\geq 4$ in \cite{HS86}.
While providing a highly unexpected answer to the aforementioned problem, Hsiang's work had the disadvantage of fully relying on an equivariant construction (in the spirit of \cite{HL71}) and hence did not shed any light on the class of minimally embedded hyperspheres of $S^{n}$ for non-round Riemannian metrics or, even more ambitiously, on the structure of the moduli space of those submanifolds. In this article, we shall prove that the exotic phenomena disclosed by Hsiang are not at all peculiar of the round metric, for in fact there exists on $S^{4}$ an overabundance of Riemannian metrics that have minimal hyperspheres of uniformly bounded area and arbitrarily large Morse index. 

In order to state our main results, we need to introduce some notation. Given an integer $q\geq 3$ and $\alpha\in(0,1/2)$, let $\Gamma=\Gamma(q,\alpha)$ be the space of $\mathcal{C}^{q,\alpha}-$Riemannian metrics on $S^{4}$ and let us agree to denote by $\gamma_0\in\Gamma$ the round metric. For $\gamma\in \Gamma$ we shall consider $[\gamma]$ to be the equivalence class of $\gamma$ modulo (pointwise) conformal equivalence and $\Pi:\Gamma\to K$ to be the corresponding projection.

\newpage

\begin{theorem}\label{thm:main}	
	There exists a neighborhood of Riemannian metrics $\mathcal{U}\subset\Gamma(q+1,\alpha)$ of $\gamma_0$ on the four-sphere such that the following statement holds: for any $[\gamma]\in \Pi(\mathcal{U})$ with vanishing Weyl tensor around two antipodal points, one can construct a converging sequence $\left\{\gamma_{k}\right\}\subset \Gamma(q,\alpha)$ and embedded hyperspheres $\left\{M_k\right\}$ with i) $\gamma_k$ conformal to $\gamma$ (namely $\Pi(\gamma_k)=[\gamma]$), ii) $M_k$ minimal in $(S^{4}, \gamma_k)$ and iii) $\lim_{k\to\infty} Ind(M_k)=\infty$. 
\end{theorem}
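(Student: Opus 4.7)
The plan is to produce $\{M_k\}$ via a conformal bubbling construction: Hsiang's exotic minimal hyperspheres $\{\Sigma_n\}\subset (S^4,\gamma_0)$ serve as a reservoir of arbitrarily large Morse index at uniformly bounded area, which we graft in small rescaled copies onto a baseline hypersphere sitting in a representative of $[\gamma]$ and then turn into a genuine minimal hypersurface by absorbing the residual mean curvature through a small conformal perturbation, so as to stay in the prescribed conformal class.

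First, I would fix a representative $\gamma'\in[\gamma]$ that is flat on small balls $B_{r_0}(p_\pm)$ around the antipodal points: the vanishing of the Weyl tensor there, combined with the Weyl--Schouten theorem in dimension four, provides local conformal flatness, and the corresponding local conformal factors can be patched with the remainder of $\gamma$ into a single global representative. Next, pick a subsequence $\Sigma_n$ of Hsiang's hyperspheres with $\mathrm{Ind}(\Sigma_n)\to\infty$, stereographically project them to obtain model hypersurfaces in $\R^4$, rescale by a small $\e_n\to 0$, and transplant the rescaled bubbles inside each of the flat balls around $p_\pm$. Graft these bubbles to a fixed embedded topological $3$-sphere passing through $p_+$ and $p_-$ via a smooth cutoff in a thin annular neck. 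The result is an embedded hypersphere $M_k^{\mathrm{ap}}\subset S^4$ of uniformly bounded $\gamma'$-area.

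To upgrade $M_k^{\mathrm{ap}}$ to a true minimal hypersurface I would invoke the conformal freedom inside $[\gamma]$. Under the change $\gamma'\to e^{2u}\gamma'$ the mean curvature of any hypersurface in the ambient four-manifold transforms as
$$H_{e^{2u}\gamma'}=e^{-u}\bigl(H_{\gamma'}-3\,\p_\nu u\bigr),$$
so choosing $u_k$ with Neumann data $\p_\nu u_k=\tfrac{1}{3}H_{\gamma'}$ along $M_k^{\mathrm{ap}}$ makes $M_k:=M_k^{\mathrm{ap}}$ minimal in $\gamma_k:=e^{2u_k}\gamma'\in[\gamma]$. Since $H_{\gamma'}$ is concentrated in the neck region and the discrepancy at scale $\e_n$ between the flat model and the spherical metric the bubble was minimal in is $O(\e_n^\kappa)$ in H\"older norms, this prescribed-Neumann boundary-value problem can be solved with $\|u_k\|_{\mathcal{C}^{q,\alpha}}\to 0$, so that $\gamma_k\to\gamma'$.

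Finally, for the index bound I would pull back unstable Jacobi test functions of $\Sigma_n$ to the bubble inside $M_k$ via the inverse of the stereographic rescaling and extend by zero. The second variation evaluated on these is, up to lower-order corrections vanishing as $\e_n\to 0$, that of $\Sigma_n$ in $\gamma_0$ (exploiting the invariance of Morse index under constant dilation of the ambient metric and the smallness of $u_k$). Thus $\mathrm{Ind}(M_k)\geq \mathrm{Ind}(\Sigma_{n(k)})-C\to\infty$. The main obstacle is synchronizing the graft with the conformal correction: the excess mean curvature at the neck must decay fast enough, in norms strong enough to guarantee solvability of the Neumann problem with small $u_k$, while being weak enough that the Hsiang-inherited test functions continue to yield strictly negative second variation; controlling this delicate balance, together with maintaining embeddedness across the grafts, is the technical heart of the construction.
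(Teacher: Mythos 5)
Your proposal rests on an input you cannot take for granted: that $\mathrm{Ind}(\Sigma_n)\to\infty$ for Hsiang's hyperspheres in $(S^4,\gamma_0)$. This is not a citable fact. The paper's own footnote points out that Hsiang established nothing about the Morse index of his hyperspheres ``(and did not have the tools to prove the divergence thereof)''; indeed the divergence of those indices is one of the nontrivial \emph{conclusions} of the paper, obtained only a posteriori and indirectly by applying Sharp's compactness theorem to a sequence converging in the varifold sense to the singular Clifford football. The fact is provable directly -- the Clifford cone $C\subset\mathbb{R}^4$ is strictly unstable, with complex indicial roots $-\tfrac12\pm\tfrac{\sqrt{-7}}{2}$ in the rotationally-symmetric mode, so one can manufacture arbitrarily many negative test directions on the expanding range of annular scales over which $\Sigma_n$ is $\mathcal{C}^2$-close to the cone -- but you give no such argument, and without it your ``reservoir of arbitrarily large Morse index'' is an assertion, and the entire final index estimate is circular.

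Even granting that input, your construction and your route to the index bound are genuinely different from the paper's, and your sketch leaves the load-bearing steps unresolved. The paper never grafts rescaled \emph{compact} Hsiang spheres onto an independent baseline: it first deforms the singular Clifford football itself to nearby conformal classes (Theorem~\ref{pro:deform}, an implicit-function-theorem argument in weighted Sobolev spaces where the conformal freedom is precisely what makes the linearized Jacobi operator invertible despite a cokernel of dimension $18$), and then glues in Alencar's non-compact Euclidean-minimal desingularizations, which are \emph{asymptotic to the Clifford cone} and hence match the deformed football with no artificial neck; the varifold limit of the resulting $M_k$ is a singular minimal hypersurface with Clifford-cone tangent cones, which is exactly what triggers the contradiction with Sharp's compactness theorem. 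Your $M_k$, by contrast, converge to the smooth baseline hypersphere (the bubbles shrink to points), so that compactness route is closed to you and the direct test-function transfer is mandatory -- yet the stereographic image of $\Sigma_n$ is not localized in a small ball (the Hsiang spheres pass near both poles of $S^4$, which map to the origin and to infinity respectively), the mean curvature of the neck is never estimated in norms that would keep the conformal correction $u_k$ small in $\mathcal{C}^{q,\alpha}$, and ``extending by zero'' requires a log-cutoff argument you do not supply. Your Neumann-data conformal trick is in the spirit of the paper's conformal factor $Q(u)$, but the paper uses the conformal freedom in a subtler, coupled way -- to regularize a degenerate linearization -- rather than to prescribe $H=0$ bluntly on a fixed hypersurface.
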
	

For instance, this theorem yields new results even in the case of perturbations of the round metric of $S^4$ which are supported on a given compact domain not containing the north and south poles.

In fact, the same conclusion of Theorem \ref{thm:main} holds true under a \textsl{pointwise} assumption, namely provided we restrict our consideration to those nearly-round metrics whose Riemann curvature tensor vanishes at a couple of antipodal points to sufficiently high order.

\begin{theorem}\label{thm:main2}	
	There exists a neighborhood of Riemannian metrics $\mathcal{U}\subset\Gamma(q+1,\alpha)$ of $\gamma_0$ on the four-sphere such that the following statement holds: for any $\gamma\in \mathcal{U}$ whose curvature tensor coincides satisfies, at two antipodal points, the equations
	\[
	Riem^\gamma=Riem^{\gamma_0}, \ \ \nabla_{\gamma} Riem^{\gamma} =\ldots=\nabla^{(q-3)}_{\gamma}Riem^{\gamma}=0
	\]
	one can construct a converging sequence $\left\{\gamma_{k}\right\}\subset \Gamma(q,\alpha)$ and embedded hyperspheres $\left\{M_k\right\}$ with i) $\gamma_k$ conformal to $\gamma$, ii) $M_k$ minimal in $(S^{4}, \gamma_k)$ and iii) $\lim_{k\to\infty} Ind(M_k)=\infty$. 
\end{theorem}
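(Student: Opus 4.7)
The plan is to adapt the proof scheme of Theorem~\ref{thm:main} by showing that the pointwise vanishing of $Riem^\gamma - Riem^{\gamma_0}$ to order $q-3$ at the two antipodal points $p_\pm$ plays the same role as the open-neighborhood vanishing of the Weyl tensor demanded in Theorem~\ref{thm:main}. Crucially, the conclusion requires $\gamma_k$ to lie in the conformal class $[\gamma]$, so we cannot merely approximate $\gamma$ by metrics whose conformal class is flat near $p_\pm$ and then pass to the limit; we must instead show that the perturbative machinery underlying Theorem~\ref{thm:main} is robust enough to tolerate a controlled failure of exact local flatness.

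First, I would extract a quantitative normal form. The assumption $Riem^\gamma|_{p_\pm} = Riem^{\gamma_0}|_{p_\pm}$ together with $\nabla^{(k)}_\gamma Riem^\gamma|_{p_\pm} = 0$ for $1 \le k \le q-3$, combined with the fact that $\nabla^{(k)} Riem^{\gamma_0} \equiv 0$ (since $\gamma_0$ is locally symmetric), forces via a Taylor expansion in $\gamma$-geodesic normal coordinates centered at $p_\pm$ that
\[
\gamma_{ij}(x) = (\gamma_0)_{ij}(x) + O(|x|^{q-1}).
\]
Consequently, $W^\gamma$ and its first $q-3$ covariant derivatives vanish at $p_\pm$, even though $[\gamma]$ need not be conformally flat on any open set. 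The construction of Theorem~\ref{thm:main} produces the hyperspheres $M_k$ via a perturbative scheme that, under open-neighborhood vanishing of Weyl, operates on an exactly conformally flat model near $p_\pm$; I would run the same scheme here on the representative of $[\gamma]$ provided by the above normal form, absorbing the additional error of size $O(\delta^{q-1})$ arising from non-flatness on a gluing region of radius $\delta$ into the implicit function theorem argument, which remains valid provided $\delta$ is small relative to the scale of the inserted Hsiang-type building block.

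The main obstacle is to quantify the stability of the gluing and Morse-index-counting arguments under the perturbation $\gamma - \gamma_0 = O(|x|^{q-1})$ near $p_\pm$. Specifically, one needs to verify (i) uniform invertibility of the Jacobi operator along the approximate minimal hypersurface when the ambient metric is $\gamma$ rather than its flat model, in suitable weighted Hölder spaces adapted to the two asymptotic ends produced by rescaling at $p_\pm$, and (ii) that the lower bounds on Morse index inherited from the Hsiang-type building blocks survive correction to the honest minimal surface in $\gamma_k$, which amounts to a spectral perturbation statement for the stability operator. Both reductions should follow from standard elliptic perturbation estimates once $q$ is chosen large enough relative to the target Morse index — matching the way $q$ appears explicitly in the regularity hypothesis of Theorem~\ref{thm:main2} — since the admissible size of $\delta$ decreases, and the complexity of the Hsiang building block increases, as the Morse index we wish to realize grows.
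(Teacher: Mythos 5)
Your key observation --- that the pointwise vanishing of $\nabla^{(k)}_\gamma Riem^\gamma - \nabla^{(k)}_{\gamma_0} Riem^{\gamma_0}$ at $p_\pm$ for $0\le k\le q-3$ can be converted, via the Taylor expansion of the metric in geodesic normal coordinates, into a quantitative agreement of $\gamma$ with $\gamma_0$ near the poles, and that this replaces the Weyl hypothesis of Theorem~\ref{thm:main} --- is precisely the crux of the paper's proof. However, the exponent is off by one: agreement of $\nabla^{(k)}Riem$ for $k\le q-3$ forces the Taylor polynomials of $\gamma_{ij}$ and $(\gamma_0)_{ij}$ to coincide through degree $q-1$, hence $\gamma_{ij}(w)=(\gamma_0)_{ij}(w)+O(|w|^{q})$, not $O(|w|^{q-1})$. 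This matters because the estimates \eqref{eq:est1}--\eqref{eq:est2} on $|\tilde H_{\eta_1,\eta_2}|$ use exactly the decay rate $\rho^{q-1}$ produced by an $O(\rho^q)$ metric discrepancy, in parallel with the bound $|1-Q(\Omega(\gamma))|\le\rho^q$ that arises in Theorem~\ref{thm:main}'s proof.

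The more serious issue is your last paragraph. The paper does not run a spectral perturbation argument to transport Morse index lower bounds from the Hsiang building blocks (indeed, as the paper's footnote points out, Hsiang never computed the index of his hyperspheres), and $q$ is \emph{not} chosen as a function of the target index. In the actual argument, $q\ge 3$ is fixed once and for all, a single sequence $\{M_{\eta_1}\}$ is built by the coarse interpolation plus Picard iteration (the latter converging uniformly as $\eta_1\to 0$ because Lemma~\ref{lem:Schauder} gives coercivity with constants independent of $\eta_1$), and the divergence $Ind(M_k)\to\infty$ is obtained \emph{by contradiction} from Sharp's compactness theorem: a uniform index bound would force the varifold limit $\boldsymbol{V}_\infty$ to be a smooth embedded minimal hypersphere, contradicting the nonplanar Clifford tangent cones at the two poles. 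You should also note that the curvature hypothesis is not needed to make the deformation step (Theorem~\ref{pro:deform}) go through --- that already applies to an arbitrary nearly-round metric --- so there is no "additional error to absorb into the implicit function theorem argument"; the pointwise curvature condition only enters the desingularization step, to ensure the mean curvature smallness near the poles that initializes the Picard scheme.
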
	

Of course, in the previous statements $Ind(M)$ stands for the Morse index of the minimal submanifold $M$, that is the number of negative eigenvalues of the Jacobi operator $J_{M}$ given by
\[
J_{M}u=\Delta_{M}u+\left(|A|^{2}+Ric(\nu,\nu)\right)
\]
where $A$ is the second fundamental form of $M$ in the ambient manifold $(N,\gamma)$ under consideration, $Ric(\cdot,\cdot\cdot)$ is the Ricci curvature tensor of such manifold and $\nu$ in the unit normal of $M$ inside $N$.
The problem of explicitly computing, or even just getting effective estimates on the Morse index of a given minimal submanifold is in general very delicate and has been tackled only in very few well-known cases. We shall start here by recalling that the equatorial hyperspheres in $(S^{n},\gamma_0)$ have Morse index equal to 1, instead when the reference metric $\gamma$ is not round but has positive Ricci curvature the index is only known to be strictly positive. When $n=3$ Eijiri and Micallef \cite{EM08} gave a remarkable, general upper bound on the Morse index in terms of the area of a closed minimal surface $M$ in a compact 3-manifold $(N,\gamma)$: when $M\simeq S^{2}$ this takes the simple form
\[
Ind(M)\leq C(N)\mathscr{H}^{2}(M)
\]
where $C(N)$ is a constant depending on the second fundamental form of an isometric embedding of $(N,\gamma)$ into Euclidean space. On the other hand, Choi and Schoen \cite{CS85} had proven that the area of a minimal embedding can be controlled by means of the genus so that in the end one achieves, for minimal spheres in 3-manifolds the bound
\[
Ind(M)\leq C(N)\frac{32\pi}{\kappa}\left(\frac{1}{|\pi_{1}(N)|}\right)
\]
provided $Ric_{\gamma}\geq\kappa>0$.
Obviously, this inequality ensures that the phenomena described in the statements of Theorem \ref{thm:main} and Theorem \ref{thm:main2} cannot possibly occur when $n=3$. In this respect, we shall remark that (by our very construction) all sequences $\left\{\gamma_k\right\}$ as in those statements are  also contained in a suitably small neighborhood of $\gamma_0$, so that a uniform positive lower bound on the Ricci curvature is guaranteed.
In fact, Cheng and Tysk \cite{CT94} could adapt the heat kernel technique of Li-Yau to prove an upper bound for the number of nonpositive eigenvalues of a Schr\"odinger operator of the form $L=\Delta_M+V$ whenever $M$ is a minimally immersed submanifold of dimension at least 3 in a closed manifold $(N,\gamma)$: this reads
\[
\# \left\{\lambda_j \ : \ \Delta_{M}u+Vu=-\lambda_j u \ \textrm{and} \ \lambda_j\leq 0\right\}\leq C(m,N)\int_{M}\left(\textrm{max}(V,1)\right)^{m/2}\,d\mathscr{H}^{m}.
\]
However, this constraint becomes \textsl{vacuous} when referred to our construction, for one can easily check that the sequences $\left\{M_k\right\}$ of minimal hyperspheres we construct satisfy
\[
\lim_{k\to\infty}\int_{M_k}|A|^{3}\,d\mathscr{H}^{3}=+\infty.
\]

\

We shall now compare the content of Theorem \ref{thm:main} and Theorem \ref{thm:main2} with the other existence results for closed minimal hypersurfaces in a four-manifold:
\begin{itemize}
\item{the general min-max theory due to Almgren and Pitts, see the monograph \cite{Pit81}, ensures the existence of one closed smooth, embedded minimal hypersurface; yet, differently from the $n=3$ case this is not known to be a hypersphere if the ambient $N$ is diffeomorphic to the standard $S^{4}$ (namely there is no four-dimensional analogue of the theorem by F. Smith \cite{Smi82}), furthemore the Morse index is quite delicate to be controlled and, in any case, is expected to equal 1;}	
\item{the pertubative methods developed by B. White in \cite{Whi91} imply the existence of at least 5 minimal hyperspheres for any nearly-round metric $\gamma$ on $S^{4}$, and by the very method they are constructed they have index bounded above by 5;}
\item{the recent results obtained by F. Marques and A. Neves in \cite{MN13} and based on min-max schemes with high-dimensional parameter spaces guarantee the existence in any four-manifold $(N,\gamma)$ of positive Ricci curvature of infinitely many closed, embedded minimal hypersurfaces. However, neither the topological complexity nor the Morse index of those elements are, at the moment, reasonably well-understood. In this respect, when discussing the open problems related to the min-max hypersurfaces obtained by considering their $p$-dimensional sweepouts $\mathcal{P}_{p}$ Marques and Neves make the following statement, Section 9 in \cite{MN13} : `One could naively expect that under generic conditions they should have index $p$, multiplicity one and their volumes converge to infinity.'}	
\end{itemize}	 
As a result, the construction we present is, to the best of our knowledge, the first exhibition of codimension-one embedded, minimal submanifolds with fixed topology, bounded volume and arbitrarily large Morse index.\footnote{Of course, \textsl{a posteriori} the Hsiang desingularizations also give such an example, even though Hsiang did not prove any result about the Morse index of those embedded minimal hyperpheres (and did not have the tools to prove the divergence thereof).}

\medskip

Let us now briefly describe the conceptual scheme of the proof of our main theorems and, correspondingly, the structure of this article. The sequences of minimal hyperspheres in $(S^{4},\gamma_0)$ constructed by Hsiang in \cite{Hsi83a} can be seen to converge, in the sense of varifolds, to a singular limit $M$ which we shall call \textsl{Clifford football}: that is a 3-dimensional minimally embedded subvariety of the four-sphere that is homeomorphic to the suspension $T^{2}\times [0,1]/\sim$ (where $T^{2}$ is the 2-torus and $\sim$ is the equivalence relations that pinches the two boundary components to points) and has two conical singularities located at antipodal points on $S^{4}$. The reason for the choice of such a name is that the blow-up of $M$ at each of those singularities is the cone over the Clifford torus $S^{1}\left(1/\sqrt{2}\right)\times S^{1}\left(1/\sqrt{2}\right)\subset S^{3}\subset \mathbb{R}^{4}$.
Now, the basic idea of our construction is to \textsl{first} deform the Clifford football as we vary the background Riemannian metric in a neighbourhood of $\gamma_0$ and \textsl{second} desingularize the corresponding perturbed Clifford footballs. 
Concerning the first step, our precise statement is as follows:

\begin{theorem}\label{pro:deform}
	Let $q\geq 3, \alpha,\alpha'\in (0,1/2)$ and $\boldsymbol{\beta}=(\beta_1,\beta_2)$ for some $\beta_1=\beta_2=\beta>1+\alpha'+q$. Then there exist bounded neighborhoods $\mathcal{U}\subset\Gamma(q+1,\alpha),\mathcal{V}\subset\Gamma(q,\alpha')$ of the round metric $\gamma_0$ on $S^4$, $\mathcal{W}\subset\mathcal{W}^{q+2,2}_{\boldsymbol{\beta}}$ of the function identically equal to zero and $\mathcal{C}^{1}$ maps $\Xi:\mathcal{U}\to\mathcal{V}$ and $\Omega:\mathcal{U}\to\mathcal{W}$ such that i) for all $\gamma\in\mathcal{U}$ the metric $\Xi(\gamma)$ is conformal to $\gamma$ and ii) the normal graph defined (over the Clifford football) by $\Omega(\gamma)$ is a singular minimal submanifold in $(S^{4},\Xi(\gamma))$. 	
\end{theorem}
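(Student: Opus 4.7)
\emph{Main plan.} My approach is to apply the implicit function theorem to the minimal-surface equation on the Clifford football $M$, treated as a nonlinear map between weighted Sobolev spaces adapted to the two conical singularities at the antipodal points $p_\pm \in S^4$. The conformal factor built into $\Xi(\gamma)$ is carried along as an auxiliary variable whose role is to compensate for the finite-dimensional obstruction one expects at $p_\pm$.

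\emph{Setup.} Fix a smooth unit normal $\nu$ to $M\setminus\{p_\pm\}$ and parametrize nearby singular hypersurfaces as normal graphs $M_u=\exp(u\nu)$ with $u\in\mathcal{W}^{q+2,2}_{\boldsymbol{\beta}}$. The inequality $\beta>1+\alpha'+q$ is precisely what is needed so that, by the weighted Sobolev embedding, $u$ sits in $\mathcal{C}^{q,\alpha'}_{loc}$ and decays together with sufficiently many derivatives strictly faster than the cone generators of $M$ at $p_\pm$: thus $M_u$ is a genuine singular hypersurface with the same conical model as $M$. Given $\gamma\in\mathcal{U}$ and a conformal factor $\phi$, consider the operator
\[
\mathcal{F}(\gamma,\phi,u) := H_{e^{2\phi}\gamma}(M_u) \in \mathcal{W}^{q,2}_{\boldsymbol{\beta}-2},
\]
which is $\mathcal{C}^1$ in its arguments and satisfies $\mathcal{F}(\gamma_0,0,0)=0$ by minimality of the Clifford football in $(S^4,\gamma_0)$.

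\emph{Linearization and Fredholm theory.} The $u$-derivative of $\mathcal{F}$ at $(\gamma_0,0,0)$ is the Jacobi operator
\[
J_M u = \Delta_M u + \left(|A|^2 + Ric_{\gamma_0}(\nu,\nu)\right)u.
\]
Near each cone point, $J_M$ is a regular-singular operator, and separation of variables in polar coordinates reduces it to a family of second-order ODEs whose indicial roots are determined (via a characteristic equation of the form $\lambda(\lambda+1)+2-\mu=0$, coming from the relation $|A|^2\sim 2/r^2$) by the eigenvalues $\mu$ of the Laplacian on the link $T^2=S^1(1/\sqrt{2})\times S^1(1/\sqrt{2})$. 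For $\beta>1+\alpha'+q$ chosen strictly between two consecutive real indicial roots, the Lockhart--McOwen theory of elliptic operators on manifolds with conical singularities guarantees that $J_M:\mathcal{W}^{q+2,2}_{\boldsymbol{\beta}}\to\mathcal{W}^{q,2}_{\boldsymbol{\beta}-2}$ is Fredholm; with a weight this large one typically expects trivial kernel but a nontrivial finite-dimensional cokernel, concentrated at $p_\pm$ and corresponding to the indicial modes with exponent less than $\beta$.

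\emph{Surjectivity and main obstacle.} Since the first conformal variation of the mean curvature at a minimal hypersurface is proportional to the normal derivative $\nu(\delta\phi)$ of the conformal factor, the $\phi$-derivative of $\mathcal{F}$ at $(\gamma_0,0,0)$ delivers an abundant family of transverse directions that can be arranged to surject onto the cokernel of $J_M$ (one exploits that, by prescribing the Taylor jet of $\phi$ at the two poles, one realizes any prescribed element of the relevant finite-dimensional obstruction space). After imposing a gauge that fixes the remaining freedom in $\phi$ (e.g.\ restricting to a slice complementary to the conformal variations used to span the cokernel), the combined linearization becomes an isomorphism and the implicit function theorem supplies $\mathcal{C}^1$ maps $\gamma\mapsto(\phi(\gamma),u(\gamma))$, from which we set $\Xi(\gamma):=e^{2\phi(\gamma)}\gamma$ and $\Omega(\gamma):=u(\gamma)$. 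The hardest step is the indicial/cokernel analysis sketched in Step 3: computing enough of the spectrum of the link Laplacian to locate the admissible weights $\beta$, checking that $\beta>1+\alpha'+q$ can be chosen to avoid every indicial root, pinning down the cokernel of $J_M$ on the weighted spaces, and verifying that conformal perturbations of $\gamma_0$ actually span it.
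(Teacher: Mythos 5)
Your proposal goes down a genuinely different road from the paper. You treat the conformal factor $\phi$ as an \emph{independent} variable alongside $u$, freeze $J_M$ on the weighted spaces, accept a nontrivial cokernel, and attempt to kill it by a Lyapunov--Schmidt reduction with conformal variations $\nu(\delta\phi)\vert_M$ playing the role of the finite-dimensional parameter. The paper instead ties the conformal factor to the very graph function $u$ through the ansatz $Q(u)(z,s)=\bigl(1+Bsu(z)/\sin^2\theta\bigr)^{2}$ and considers a map $\mathcal{M}(\gamma,u)$ of only two arguments. Because $Q(u)\equiv1$ on $M$ and $\gamma_0(\nabla Q(u),\nu)=2Bu/\sin^2\theta$, the $u$-linearization becomes the \emph{shifted} operator $J_M+B\sin^{-2}(\theta)$ rather than $J_M$ itself. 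Choosing $B=-2(1+b^2)$ with $b$ large simultaneously restores injectivity (maximum principle) and pushes all indicial roots out of the interval $[-\beta-1,\beta]$, so the weight-crossing formula gives Fredholm index zero and the linearization is already an isomorphism. No cokernel analysis, pairing computation, or gauge fixing is needed. In short, the paper converts what in your setup would be a delicate reduction into a direct implicit-function-theorem application.

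The step you defer is not a routine verification, and this is where there is a real gap. You would need to show that the map $\delta\phi\mapsto 3\,\nu(\delta\phi)\vert_M$ hits all of $\operatorname{coker}(J_M)$ (which the paper identifies as 18-dimensional for $\beta>1$), \emph{and} that each such $\nu(\delta\phi)\vert_M$ actually lies in the target $\mathcal{W}^{q,2}_{\boldsymbol{\beta}-2}$. This latter constraint is already restrictive: a generic smooth $\phi$ on $S^4$ yields a bounded, non-decaying $\nu(\phi)\vert_M$, which is \emph{not} in $\mathcal{W}^{q,2}_{\boldsymbol{\beta}-2}$ since $\beta>1+\alpha'+q>4$; your $\phi$'s must have their normal jets vanish to high order at the poles, and it is then no longer clear that the resulting conformal variations pair nondegenerately with the slowly decaying (and two ``regularizing'') cokernel modes. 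You also need the chosen finite-dimensional slice of conformal factors to vary $\mathcal{C}^1$ in $\gamma$ so that the output $\Xi$ is genuinely a map on a metric neighborhood. None of these points is merely clerical, and the paper's concluding remarks explicitly flag the cokernel as the reason a naive reduction fails: the Fredholm index $-18$ exceeds (in absolute value) the 16 geometric degrees of freedom, with two extra ``regularizing'' cokernel directions. Your proposal amounts to claiming that conformal variations account for all of these, but this is precisely what would need proof, and it is the step your sketch leaves open.
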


This can be considered a perturbative result relative to a geometric problem for which not only a direct application of the Implicit Function Theorem, but also any sort of Lyapunov-Schmidt reduction is ineffective (as will be further explained in Section \ref{sec:concl}).
Indeed, we exploit the freedom on the conformal factor, namely the fact that we are working inside a conformal class rather than with a fixed background metric, in order to overcome the obstructions related to i) the action of global isometries on $(S^{4},\gamma_0)$ and ii) the presence of \textsl{regularizing modes}, associated to desingularizations of the Clifford football at each of its singularities.

Roughly speaking, we can then produce (inside the conformal class $[\gamma]$) a family of minimal embedded desingularizations of $M_{\Omega(\gamma)}$ which converge to a limit that has nonplanar tangent cones at two antipodal points.
At that stage, the conclusion comes, arguing by contradiction, by means of the recent compactness theorem by B. Sharp \cite{Sha15}. For if there were a uniform upper bound on the Morse indices of the elements of $\left\{M_k\right\}$ then there should be a subsequence converging to a \textsl{smooth} embedded minimal hypersurface, which is not the case.

The structure of the article is as follows: in Section \ref{sec:prelim} we present some background material concerning submanifolds with conical singularities, weighted functional spaces and then specialize our discussion to the Clifford football and its Hsiang and Alencar regularization, in Section \ref{sec:deform} we deform the Clifford football in order to obtain singular minimal submanifolds for nearly-round metrics and in Section \ref{sec:desing} we desingularize such elements in order to obtain smooth minimal hyperspheres. Finally, we present in Section \ref{sec:concl} a series of remarks about our construction, variations thereof and related open problems.

\medskip

\textsl{Acknowledgments}. The author wishes to express his deepest gratitude to Prof. Andr\'e Neves for suggesting the problems this project arose from and for a number of enlightening conversations. He would also like to thank Mark Haskins and Andrea Malchiodi for several useful discussions and for their interest in this work. Furthermore, he is indebted to Ben Sharp for clarifying some aspects concerning the applicability of his recent compactness theorem. During the preparation of this article, the author was supported by Prof. Neves European Research Council Start Grant.

\section{Preliminaries and recollections}\label{sec:prelim}

This article concerns the deformation and desingularization of minimal submanifolds with isolated singularities, therefore let us start by defining this category and describing the functional set-up we will consider in the sequel.

\subsection{Manifolds with isolated conical singularities}\label{subs:cate}

\begin{definition}\label{def:singman}
	Given an integer $m\geq 1$ we define an $m$-dimensional manifold with isolated singularities (of class $\mathcal{C}^{k,\alpha}$ or, respectively, $\mathcal{C}^{\infty}$)  to be a triple $(M,S,d)$ where $S$ is a finite (yet possibly empty) set $\left\{p_{1},\ldots,p_{e}\right\}\subset M$ such that $(M,d)$ is a compact metric space and the following conditions hold:
	\begin{enumerate}
		\item{the set $\dot{M}:=M\setminus \left\{p_{1},\ldots, p_{e}\right\}$ is an open manifold of class $\mathcal{C}^{k,\alpha}$ (resp. $\mathcal{C}^{\infty}$); }
		\item{there exists a compact set $C\subset M$ such that $M\setminus C=\bigsqcup_{i=1}^{e}E_i$ and for each value of the index $i$ there exists a smooth, closed connected $(m-1)$-manifold $P_{i}$ such that $\phi_i:(0,1]\times P_i\to \overline{E_i}$ is a diffeomorphism (of the appropriate level of regularity, as above)}
		\item{there exists a $\mathcal{C}^{k,\alpha}$ (resp. $\mathcal{C}^{\infty}$) Riemannian metric $\g$ on $\dot{M}$ that induces the distance $d$ and furthermore for positive constants $\nu_1,\ldots,\nu_e$
		\[
		|\tilde{\nabla}^{j}(\phi^{\ast}_{i}\g-\tilde{\g}_{i})|_{\tilde{\g}_i}=O(r^{\nu_i-j}) \ \forall \ 0\leq j\leq k, \ \
	     [\tilde{\nabla}^{k}(\phi^{\ast}_{i}\g-\tilde{\g}_{i})]^{\tilde{\g_i}}_{\alpha}=O(r^{\nu_i-j-\alpha})
		\]	
		(resp. $|\tilde{\nabla}^{j}(\phi^{\ast}_{i}\g-\tilde{\g}_{i})|_{\tilde{\g_i}}=O(r^{\nu_i-j})$ for all $j\geq 0$), where $\tilde{\g}_{i}=dr^{2}+r^{2}\g'_{i}$ for coordinates $(\theta,r)\in P_i\times (0,1]$ and $\g'_i$ a Riemannian metric on $P_i$.}
	\end{enumerate}	
	Here $k\geq 2$ is an integer, $\alpha\in (0,1)$.
\end{definition}	

It is straightforward to check that each $P_{i}$ is uniquely determined and hence there is a well-defined notion of singular model at each singular point. Notice that by allowing the set of singular points to be empty we allow regular manifolds to be regarded as (exceptional) manifolds with isolated singularities, which is just convenient in a number of situations. 

When defining weighted Sobolev and H\"older spaces, we will make use of a \textsl{radius function}.

\begin{definition}\label{def:radius}
Given a manifold with conical singularities $(M,S,d)$ as per Definition \ref{def:singman} given above, we will say that $\rho:M\to (0,\infty)$ is a radius function if $\rho=d(p_i,\cdot)$ on $E_i$ for any $i=1,\ldots, e$.
\end{definition}

Given a multi-index $\boldsymbol{\beta}=(\beta_1,\ldots,\beta_e)\in\mathbb{R}^{e}$, we shall now define the functional spaces we need. To that aim, let us agree to denote by $\rho^{\boldsymbol{\beta}}$ a positive function that equals $\rho^{\beta_i}$ along the end $E_{i}\subset M$.

\begin{definition}
Given a manifold with conical singularities $(M,S,d)$ and a multi-index $\boldsymbol{\beta}\in\mathbb{R}^{e}$, we let:
\begin{enumerate}
\item{	
	 $\mathcal{W}^{k,p}_{\boldsymbol{\beta}}(M)$ to be the Banach space completion of $\mathcal{C}^{\infty}(\dot{M})$ with respect to the norm
\[
\left\|u\right\|_{\mathcal{W}^{k,p}_{\boldsymbol{\beta}}}:=\left(\sum_{j=0}^{k}\int_{M}|\rho^{(-\boldsymbol{\beta}+j)}\nabla^{j}u|^{p}\rho^{-m}\,d\mu_{\gamma}\right)^{1/p}.
\]	
When $k=2$, we shall agree to use the notation $\mathcal{H}^{k}_{\boldsymbol{\beta}}(M)$ in lieu of $\mathcal{W}^{k,p}_{\boldsymbol{\beta}}(M)$;}
\item{$\mathcal{C}^{k,\alpha}_{\boldsymbol{\beta}}(M)$ to be the Banach space completion of $\mathcal{C}^{\infty}(\dot{M})$ with respect to the norm
\[
\left\|u\right\|_{\mathcal{C}^{k,\alpha}_{\boldsymbol{\beta}}}:=\sum_{j=0}^{k}\sup_{\dot{M}}\rho^{-\boldsymbol{\beta}+j}|\nabla^{j}u| + \sup_{x\neq y \in \dot{M}}\frac{|\rho^{-\boldsymbol{\beta}+k}(x)\nabla^{k}u(x)-\rho^{-\boldsymbol{\beta}+k}(y)\nabla^{k}u(y)|}{d(x,y)^{\alpha}}.
\]	
	 }
\end{enumerate}	
\end{definition}	

Some fundamental facts about Analysis on manifolds with conical singularities or, more generally, on \textsl{conifolds} have been studied in detail and collected in \cite{Pac13}. For our purposes, we shall state the following version of the Sobolev embedding theorem.

\begin{theorem}\label{thm:Sob}
Let $(M,S,d)$ be a manifold with conical singularities. Assume $k\in\mathbb{N}$, $l\in\mathbb{N}^{\ast}$ and $p\geq 1$. Given a multi-index $\boldsymbol{\beta}$ for all $\boldsymbol{\beta}'\leq\boldsymbol{\beta}$ the following statements hold:
\begin{enumerate}
\item{If $lp<m$ then there exists a continuous embedding $\mathcal{W}^{k+l,p}_{\boldsymbol{\beta}}(M)\hookrightarrow \mathcal{W}^{k,p^{\ast}_l}_{\boldsymbol{\beta}'}(M)$;}	
\item{If $lp=m$ then, for all $q\in [p,\infty)$, there exists a continuous embedding $\mathcal{W}^{k+l,p}_{\boldsymbol{\beta}}(M)\hookrightarrow \mathcal{W}^{k,q}_{\boldsymbol{\beta}'}(M)$;}
\item{If $lp>m$ then, for all $\alpha\in [0,\min\left\{1,l-m/p\right\}]$, there exists a continuous embedding $\mathcal{W}^{k+l,p}_{\boldsymbol{\beta}}(M)\hookrightarrow \mathcal{C}^{k,\alpha}_{\boldsymbol{\beta}'}(M)$. }	
\end{enumerate}		
	Here we have denoted the Sobolev-dual exponent of $p$ by $p^{\ast}_{l}$, namely $p^{\ast}_{l}=\frac{mp}{m-lp}$.
\end{theorem}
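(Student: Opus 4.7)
The plan is to reduce, via a dyadic decomposition of each conical end and rescaling to a fixed model annulus, to the classical unweighted Sobolev embedding on a compact manifold with boundary. This is the standard strategy for weighted spaces on conifolds, and the precise form of the weights in the definitions of $\mathcal{W}^{k,p}_{\boldsymbol{\beta}}$ and $\mathcal{C}^{k,\alpha}_{\boldsymbol{\beta}}$ is tailored exactly so as to make the norms scale-invariant under such rescalings.

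First I would fix a compact piece $C \subset M$ as in Definition \ref{def:singman}, so that $M \setminus C = \bigsqcup_{i=1}^{e} E_i$. Over $C$ the radius function $\rho$ is bounded above and below by positive constants, so the weighted norms are equivalent to the unweighted ones and the three embeddings there follow from the classical Sobolev embedding on a compact manifold. It therefore suffices to treat a single end $E$ of weight $\beta$. Decompose $E$ dyadically into annuli $A_k := \phi((2^{-k-1}, 2^{-k}) \times P)$ and introduce the rescalings $\psi_k(s, \theta) := \phi(2^{-k} s, \theta) : (1/2, 1) \times P \to A_k$. Because $\gamma$ is asymptotic to the cone metric $\tilde\gamma = dr^2 + r^2 g'$ with rate $\nu > 0$, the rescaled metrics $2^{2k} \psi_k^* \gamma$ converge smoothly to $\tilde\gamma$ on the fixed model annulus, making all forthcoming constants uniform in $k$.

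Setting $\tilde u := u \circ \psi_k$, a direct scaling computation shows that the factors $\rho^{-\boldsymbol{\beta}+j}$ (attached to the $j$-th derivative) and $\rho^{-m}$ (in the volume measure) are chosen precisely so that, after pulling back to the unit annulus, the derivative scaling $|\nabla^j u|_\gamma \circ \psi_k \sim 2^{kj} |\nabla^j \tilde u|_{\tilde\gamma}$ combines with $\rho \sim 2^{-k}$ and $\psi_k^{*} d\mu_\gamma \sim 2^{-km} d\mu_{\tilde\gamma}$ to give
\[
\|u\|^{p}_{\mathcal{W}^{k+l,p}_{\boldsymbol{\beta}}(A_k)} \;\asymp\; 2^{kp\beta}\,\|\tilde u\|^{p}_{W^{k+l,p}((1/2,1)\times P)},
\]
with the surviving power of $2$ independent of $j$; this is the crucial cancellation. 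Analogous equivalences hold for the target norms $\mathcal{W}^{k,p_l^*}_{\boldsymbol{\beta}'}$, $\mathcal{W}^{k,q}_{\boldsymbol{\beta}'}$ and $\mathcal{C}^{k,\alpha}_{\boldsymbol{\beta}'}$, with $\beta$ replaced by $\beta'$ and $p$ by the relevant exponent. Applying the classical Sobolev embedding to $\tilde u$ on the fixed annulus and combining then yields, in case (1),
\[
\|u\|^{p_l^*}_{\mathcal{W}^{k,p_l^*}_{\boldsymbol{\beta}'}(A_k)} \;\leq\; C\cdot 2^{k p_l^*(\beta'-\beta)}\,\|u\|^{p_l^*}_{\mathcal{W}^{k+l,p}_{\boldsymbol{\beta}}(A_k)},
\]
and entirely analogous annular estimates in cases (2) and (3).

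To conclude, one sums over $k$. The hypothesis $\boldsymbol{\beta}' \leq \boldsymbol{\beta}$ gives $2^{kp_l^*(\beta'-\beta)} \leq 1$, and since $p_l^*/p > 1$ the elementary inequality $\sum_k a_k^{p_l^*/p} \leq (\sum_k a_k)^{p_l^*/p}$ (for nonnegative $a_k$) converts the sum of annular source contributions into the global $\mathcal{W}^{k+l,p}_{\boldsymbol{\beta}}$-norm raised to the target power, yielding case (1); case (2) is handled identically. Case (3) is also treated annulus-by-annulus, but requires one extra step to control the cross-annulus contribution to the H\"older seminorm of $\rho^{-\boldsymbol{\beta}+k}\nabla^k u$: for $x \in A_k$, $y \in A_{k'}$ with $|k-k'|$ large one uses $d(x,y) \gtrsim \rho(x)+\rho(y)$ to absorb the difference quotient into the sup-norm already bounded, while for adjacent annuli one works on a slightly enlarged model such as $(1/4,1)\times P$. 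I expect the main technical obstacle to be precisely the verification of the scale-invariant equivalence displayed above --- in particular the cancellation that makes the power of $2$ independent of $j$ --- together with the check that the asymptotically-conical (rather than exactly conical) nature of $\gamma$ contributes only lower-order corrections, absorbable into the constants in the limit $k \to \infty$.
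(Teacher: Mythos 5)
The paper does not give a proof of Theorem \ref{thm:Sob} at all: it is stated and imported directly from Pacini's work \cite{Pac13}, whose entire strategy (``uniform estimates via weighted Sobolev spaces'') is the dyadic-decomposition-and-rescaling argument you outline. Your reduction to the compact core plus a single end, the rescalings $\psi_{k}$ to a fixed model annulus, the cancellation making the surviving power of $2$ equal to $2^{kp\beta}$ independently of the derivative order $j$, the use of $\boldsymbol{\beta}'\leq\boldsymbol{\beta}$ and $p^{\ast}_{l}\geq p$ to sum via $\ell^{p}\hookrightarrow \ell^{p^{\ast}_{l}}$, and the cross-annulus treatment of the H\"older seminorm via $d(x,y)\gtrsim\rho(x)+\rho(y)$ and overlapping model annuli --- this is exactly the proof in the cited reference, correctly reproduced. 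One small point worth flagging: with the H\"older seminorm literally as displayed in the paper's definition of $\mathcal{C}^{k,\alpha}_{\boldsymbol{\beta}}$, the quantity $\rho^{-\boldsymbol{\beta}+k}\nabla^{k}u$ rescales to order $2^{k'\beta}$ on the $k'$-th annulus while the denominator $d(x,y)^{\alpha}$ contributes a further $2^{k'\alpha}$, so the H\"older piece would scale by $2^{k'(\beta+\alpha)}$, inconsistently with the $2^{k'\beta}$ scaling of the $C^{0}$ pieces. The scale-invariant version that makes your equivalences true requires the extra weight $\rho^{\alpha}$ (equivalently, replacing $\rho^{-\boldsymbol{\beta}+k}$ by $\rho^{-\boldsymbol{\beta}+k+\alpha}$, or normalizing the distance by $\rho$), which is the definition actually used in \cite{Pac13}; the discrepancy is a typo in the paper's displayed definition, not a gap in your argument, but the scaling check you correctly identify as ``the main technical obstacle'' is precisely where this matters.
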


\subsection{Minimal submanifolds with isolated conical singularities}\label{subs:subm}

In this work we shall be interested in those manifolds with isolated conical singularities $(M,S,d)$ for which $M$ is a subset of a Riemannian manifold $(N,\gamma)$ and the function $d:M\times M\to \mathbb{R}$ is the restriction to $M\times M$ of the ambient distance determined by $\gamma$. This implies that (with the notations of Definition \ref{def:singman}) the Riemannian metric on $\dot{M}$ is obtained by restriction of $\gamma$ and similarly for all derived structures, starting with the Levi-Civita connection. 

Let us then assume, from now onwards, that $m\geq 2$ so that the singularities have codimension at least two in $M$. A simple argument, based on removing small geodesic balls around the singularities and integrating by parts, gives the following characterization of \textsl{minimality} in our category.

\begin{lemma}\label{lem:first}
	Let $(N,\gamma)$ be a Riemannian manifold and let $(M,S,d)$ be a submanifold with isolated conical singularities (in the sense of \ref{def:singman}). Given a differentiable one-parameter family of diffeomorphisms of $N$, say $\phi_{t}:N\to N$ (with $\phi_{0}=0$), then
	\[
	\left[\frac{d}{dt}\mathscr{H}^{m}((\phi_{t})_{\#}M)\right]_{t=0}=-\int_{\dot{M}}\gamma\left(X,\vec{H}\right)\,d\mathscr{H}^{m}
	\] 	
	where we denote $X=\left(\frac{d\phi_{t}}{dt}\right)_{t=0}$ the deformation vector field. In particular, $M$ is stationary if and only if the mean curvature $\vec{H}$ vanishes along the regular part of $M$, so if and only if it is a singular minimal submanifold of $N$.
\end{lemma}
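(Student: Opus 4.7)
The plan is to reduce to the smooth setting by excising small geodesic balls around each conical singularity, apply the classical first variation formula on the resulting compact smooth submanifold-with-boundary, and pass to the limit as the excision radius tends to zero, exploiting the conical structure to kill the boundary flux.

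Concretely, let $X=\left(\frac{d\phi_t}{dt}\right)_{t=0}$ and, for each sufficiently small $\e>0$, set $M_\e:=\dot{M}\setminus \bigcup_{i=1}^{e}B^{\gamma}_\e(p_i)$. Since $M_\e$ is a smooth compact $m$-dimensional submanifold-with-boundary in $(N,\gamma)$, the classical first variation formula yields
\[
\left[\frac{d}{dt}\mathscr{H}^{m}(\phi_{t}(M_\e))\right]_{t=0}=-\int_{M_\e}\gamma(X,\vec{H})\,d\mathscr{H}^{m}+\int_{\partial M_\e}\gamma(X,\eta)\,d\mathscr{H}^{m-1},
\]
where $\eta$ denotes the outward unit conormal to $\partial M_\e$ inside $M_\e$. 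Letting $\e \to 0^{+}$, item (3) of Definition \ref{def:singman} implies that the induced metric on each end $E_i$ is asymptotic to an exact cone $\tilde{\gamma}_i=dr^{2}+r^{2}\gamma'_i$; in particular $\mathscr{H}^{m-1}(\partial M_\e \cap \overline{E_i})=O(\e^{m-1})$, and since $X$ is bounded on a neighbourhood of $M$ and $m\geq 2$, the boundary term vanishes in the limit. The analogous bound $\mathscr{H}^{m}(M\cap B^{\gamma}_\e(p_i))=O(\e^m)$, combined with the uniform Lipschitz control of $\phi_t$ for $t$ in a small neighbourhood of $0$, gives $\mathscr{H}^{m}(\phi_t(M))-\mathscr{H}^{m}(\phi_t(M_\e))=O(\e^m)$ uniformly in such $t$, which legitimises the interchange of the limit in $\e$ with the $t$-derivative. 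The interior integral passes to its full counterpart by dominated convergence, using that $|\vec{H}|=O(r^{-1})$ on each conical end and $r^{-1}r^{m-1}$ is integrable near $r=0$ when $m\geq 2$.

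The second assertion then follows by a standard calculus-of-variations argument: if $\vec{H}\equiv 0$ on $\dot{M}$, stationarity is immediate from the identity just obtained; conversely, if $\vec{H}(x_0)\neq 0$ at some $x_0 \in \dot{M}$, one chooses an ambient vector field $X$ supported in a small coordinate neighbourhood of $x_0$ inside $\dot{M}$ and equal to a bump-function multiple of $\vec{H}$ there, producing a diffeomorphism $\phi_t$ with strictly negative first variation and contradicting stationarity. The only genuinely delicate point is the boundary-flux vanishing, which relies essentially on the conical geometry and on the codimension-at-least-two nature of the singular set encoded in the assumption $m\geq 2$; once this is in place, the rest of the argument is routine.
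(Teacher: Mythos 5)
Your excision strategy is precisely the one the paper has in mind (``removing small geodesic balls around the singularities and integrating by parts''), and you correctly single out the boundary flux as the delicate point, killed by the codimension-two condition $m\geq 2$. However, the step that passes the $\e$-limit through the $t$-derivative has a gap as written: a uniform-in-$t$ bound $\mathscr{H}^m(\phi_t(M))-\mathscr{H}^m(\phi_t(M_\e))=O(\e^m)$ does \emph{not} by itself imply that the derivatives at $t=0$ converge. For instance, $h_\e(t)=\e^m\sin(t/\e^{2m})$ is uniformly $O(\e^m)$ yet has $h_\e'(0)=\e^{-m}\to\infty$. What you actually need is smallness of the \emph{derivative} of the excised remainder, not of the remainder itself.

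There are two easy ways to close this. Either compute the derivative of the remainder directly: since $X$ is $\mathcal{C}^1$, the classical first variation formula applied to the compact piece $M\cap\bigcup_i B_\e(p_i)$ gives $\frac{d}{dt}\mathscr{H}^m\bigl(\phi_t(M\cap\bigcup_i B_\e(p_i))\bigr)\big|_{t=0}=\int_{M\cap\bigcup_i B_\e(p_i)}\mathrm{div}_M X\,d\mathscr{H}^m$, and $|\mathrm{div}_M X|\leq C\|DX\|_\infty$ pointwise regardless of $M$, so this term is $O(\e^m)$. Or, more cleanly, bypass the interchange altogether: start from the identity $\left[\frac{d}{dt}\mathscr{H}^m((\phi_t)_\# M)\right]_{t=0}=\int_{\dot M}\mathrm{div}_M X\,d\mathscr{H}^m$, which is valid as soon as $X$ is $\mathcal{C}^1$ and $\mathscr{H}^m(M)<\infty$ and requires no regularity of $M$ near the singularities; then write the right-hand side as $\lim_{\e\to0}\int_{M_\e}\mathrm{div}_M X$, integrate by parts on each $M_\e$, and send $\e\to0$. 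The boundary flux dies exactly as you argued, the interior term converges by dominated convergence using your $|\vec H|=O(r^{-1})$ bound, and no interchange of a $t$-derivative with an $\e$-limit is ever needed. Everything else in your write-up, including the bump-function argument for the converse, is fine and matches the paper's intent.
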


The following remark ensures that we could \textsl{equivalently} build up our theory in a much weaker setting, namely that of stationary (integer rectifiable) varifolds (we are adopting the terminology of \cite{Sim83}).

\begin{remark} (Varifold perspective)
	\begin{theorem}(see \cite{Sim83b} and \cite{Sim85}, Theorem 5.7)
		Let $(N,\gamma)$ be a Riemannian manifold and let $\textbf{V}$ be integer rectifiable, $m$-dimensional varifold such that $\overline{\textrm{spt}(\textbf{V})}\setminus \textrm{spt}(\textbf{V})$ consists of a finite set $S$. Suppose that at each of the singularities $p_{i}, \ i=1,2,\ldots, e$ there exists a tangent cone $T_{i}$ which is regular and has multiplicity one. Then each $T_{i}$ is the unique tangent cone to $\textbf{V}$ at $p_{i}$ and moreover  there exists $r_{0}>0$ such that $\textbf{V}\cap B_{r_{0}}(p_{i})$ is the graph over $T_{i}$ of a $\mathcal{C}^{2}$ function $h_{i}: T\cap B_{r}(p_{i})\to\underline{E}$ satisfying the estimates $|r^{-1}h_{i}(r\omega)|+|\nabla h(r\omega)|_{\gamma}\to 0$ as $r\to 0$.
		Here $\underline{E}$ stands for the normal bundle to $T_{i}$ in the tangent space $T_{p_{i}}N$ and $r,\omega$ are polar coordinates on $T_{i}$ associated to geodesic normal coordinates for $N$ at the point $p_{i}$.
	\end{theorem}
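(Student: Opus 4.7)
The plan is to localize near one singular point $p=p_i$. Working in geodesic normal coordinates on $N$ centered at $p$, the metric $\gamma$ differs from Euclidean by $O(r^2)$, so the problem becomes a perturbation of the classical Euclidean one: a stationary integer rectifiable varifold in a small ball admitting the regular, multiplicity-one minimal cone $T\subset\mathbb{R}^{m+\mathrm{codim}}$ as a tangent cone at the origin.

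First, I would obtain a graphical representation of $\mathbf{V}$ over $T$ on each of a sequence of dyadic annuli accumulating at $p$. By monotonicity and the tangent cone hypothesis, the rescalings $\mathbf{V}_{r_k}$ along some sequence $r_k\to 0$ converge as varifolds to $T$. Because $T$ is smooth and has multiplicity one on the annulus $B_2\setminus B_{1/2}$, Allard's regularity theorem produces, for $k$ large, a smooth normal graph $h_{r_k}$ over $T\cap(B_2\setminus B_{1/2})$ with $\|h_{r_k}\|_{\mathcal{C}^{1,\alpha}}\to 0$, bootstrapped to $\mathcal{C}^{2,\alpha}$ by elliptic regularity for the minimal surface equation. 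Scaling back, this yields a smooth graphical representation on each original shell $B_{2r_k}(p)\setminus B_{r_k/2}(p)$, but only along the chosen sequence.

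Second, I would upgrade this to a global graph on a punctured ball and simultaneously prove uniqueness of the tangent cone via a Lojasiewicz--Simon argument of the type in \cite{Sim83b}. Since $\Sigma:=T\cap S^{m+\mathrm{codim}-1}$ is a closed smooth minimal submanifold of the sphere, the Jacobi operator of $T$ is Fredholm on appropriate weighted Sobolev spaces. The area functional on small normal graphs $h$ over $T\cap(B_2\setminus B_{1/2})$ then satisfies Simon's infinite-dimensional Lojasiewicz inequality
\[
|\mathcal{A}(h)-\mathcal{A}(0)|^{1-\theta}\leq C\,\|\mathcal{M}(h)\|_{L^2},
\]
for some $\theta\in(0,\tfrac{1}{2}]$, where $\mathcal{M}$ is the $L^2$-gradient of $\mathcal{A}$ (i.e. the minimal surface operator). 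Via the monotonicity formula this turns into a differential inequality for the $L^2$-distance between $\mathbf{V}_r$ and $T$ as a function of $t=-\log r$, whose integration gives summability of this distance along dyadic scales. That summability is exactly what is needed to (i) patch the annular graphs $h_{r_k}$ into a single smooth section $h$ on $T\cap(B_{r_0}(p)\setminus\{0\})$, (ii) prove the full convergence $\mathbf{V}_r\to T$, hence uniqueness of the tangent cone, and (iii) extract the decay $r^{-1}|h(r\omega)|+|\nabla h(r\omega)|_{\gamma}\to 0$.

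The hard part is the Lojasiewicz--Simon step: choosing function spaces for normal sections of the truncated cone that are compatible with the conical scaling, quotienting out the radial dilation mode (which lies in the kernel of the Jacobi operator modulo reparametrisation of $r$), and verifying that the ambient perturbation coming from $\gamma-\gamma_{\text{flat}}$ — an $O(r)$ correction to $\mathcal{M}$ in normal coordinates — is subcritical relative to the Lojasiewicz exponent $\theta$, so that it does not destroy the decay. The remaining ingredients (monotonicity, Allard, and conversion of the gradient inequality into summable annular decay) are standard once the analytic framework is in place.
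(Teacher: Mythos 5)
The paper does not actually prove this statement: it is quoted verbatim as a theorem of Simon, with the references \cite{Sim83b} and \cite{Sim85} standing in for a proof. So there is no in-paper argument to compare against. That said, your outline is a faithful high-level account of Simon's own proof: reduce to the nearly-Euclidean setting via geodesic normal coordinates, use monotonicity plus Allard's regularity to obtain $\mathcal{C}^{2,\alpha}$ graphical representations over the cone on dyadic annuli along a subsequence, and then establish the infinite-dimensional \L{}ojasiewicz--Simon inequality for the area functional on the link $\Sigma=T\cap S^{n-1}$ (equivalently in the cylindrical reparametrization $t=-\log r$), which converts into a differential inequality yielding summable decay, hence full convergence of the rescalings, uniqueness of the tangent cone, and the $o(1)$ decay of $r^{-1}h$ and $\nabla h$. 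The remark that the $O(r)$ ambient perturbation must be shown to be subcritical relative to the \L{}ojasiewicz exponent is exactly the point that Simon handles in the Riemannian (as opposed to Euclidean) case.

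One inaccuracy worth flagging: you describe the ``radial dilation mode'' as lying in the kernel of the Jacobi operator of $T$ modulo reparametrization of $r$. Since the cone $T$ is scale-invariant, the dilation vector field $r\partial_r$ is everywhere tangent to $T$, so it has trivial normal projection and does not produce a Jacobi field that needs to be quotiented out; in the cylindrical picture radial dilation is just translation in $t$ and is the autonomy of the flow, not a kernel obstruction. What does require care in Simon's argument is the possible nontrivial kernel coming from a moduli of minimal submanifolds of $S^{n-1}$ near the link $\Sigma$ (and, when present, the Jacobi fields generated by ambient isometries fixing the singular point); these are handled by the Lyapunov--Schmidt reduction that underlies the \L{}ojasiewicz--Simon inequality, not by discarding a dilation mode. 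This does not change the overall strategy, which you have captured correctly.
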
	
\end{remark}	

Let us now restrict our attention to the codimension one case, namely when $\dim(N)=\dim(M)+1\geq 3$. If $M\hookrightarrow (N,\gamma)$ is a \textsl{minimal} submanifold with isolated conical singularities, then we can locally (and globally, whenever $M$ is two-sided) describe the mean curvature vector $\vec{H}$ on the regular part $\dot{M}$ as $\vec{H}=H\nu$ and we shall adopt this convention without futher remarks. Analogously to Lemma \ref{lem:first}, it is an easy exercise to prove the following statement concerning the second variation of the $m$-dimensional area functional.

\begin{lemma}\label{lem:second}
	Let $(N,\gamma)$ be a Riemannian manifold as above and let $(M,S,d)$ be a submanifold with isolated conical singularities (in the sense of Definition \ref{def:singman}). Given a differentiable one-parameter family of diffeomorphisms of $N$, say $\phi_{t}:N\to N$ (with $\phi_{0}=0$ and $\phi_t=id$ outside a compact subset of $M$), then
	\[
	\left[\frac{d^2}{dt^2}\mathscr{H}^{m}((\phi_{t})_{\#}M)\right]_{t=0}=-\int_{\dot{M}}u J_M u\,d\mathscr{H}^{m}
	\] 	
	where $J_M u=\Delta_M u+(|A|^{2}+Ric(\nu,\nu))u$ and $u=\gamma(X,\nu)$ for $X=\left(\frac{d\phi_{t}}{dt}\right)_{t=0}$. 
\end{lemma}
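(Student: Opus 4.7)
The plan is to reduce to the classical smooth case by an exhaustion argument. Fix $\varepsilon>0$ small enough that the geodesic balls $B_\varepsilon^{\gamma}(p_i)$ are pairwise disjoint and set $M_\varepsilon:=\dot{M}\setminus\bigsqcup_{i=1}^{e}B_\varepsilon^{\gamma}(p_i)$, which is a compact smooth minimal hypersurface with boundary inside $N$. Since the second variation of area at a minimal hypersurface depends only on the normal component $u=\gamma(X,\nu)$ of the deformation $X=\left(\frac{d\phi_{t}}{dt}\right)_{t=0}$, the classical formula on $M_\varepsilon$ reads
\[
\left[\frac{d^2}{dt^2}\mathscr{H}^{m}(\phi_{t}(M_\varepsilon))\right]_{t=0}=\int_{M_\varepsilon}\bigl(|\nabla u|^{2}-(|A|^{2}+Ric(\nu,\nu))u^{2}\bigr)\,d\mathscr{H}^{m}+B^{(1)}_{\varepsilon},
\]
where $B^{(1)}_\varepsilon$ is a boundary contribution supported on $\partial M_\varepsilon$.

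Integrating by parts on $M_\varepsilon$ gives $\int_{M_\varepsilon}|\nabla u|^{2}\,d\mathscr{H}^{m}=-\int_{M_\varepsilon}u\,\Delta_{M}u\,d\mathscr{H}^{m}+\int_{\partial M_\varepsilon}u\,\partial_{\eta}u\,d\mathscr{H}^{m-1}$, which combined with the formula above yields
\[
\left[\frac{d^2}{dt^2}\mathscr{H}^{m}(\phi_{t}(M_\varepsilon))\right]_{t=0}=-\int_{M_\varepsilon}u\,J_{M}u\,d\mathscr{H}^{m}+B_\varepsilon,
\]
with $J_{M}u=\Delta_{M}u+(|A|^{2}+Ric(\nu,\nu))u$ and $B_\varepsilon$ the aggregate of all boundary contributions now concentrated on $\partial M_\varepsilon$. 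The statement will follow by letting $\varepsilon\to 0$ provided one can check that $B_\varepsilon\to 0$ and that the interior integral converges to $-\int_{\dot{M}}u\,J_{M}u\,d\mathscr{H}^{m}$.

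For the boundary analysis, the conical structure of Definition \ref{def:singman} yields $\mathscr{H}^{m-1}(\partial M_\varepsilon\cap B_\varepsilon(p_i))=O(\varepsilon^{m-1})$; since $X$ is smooth on $N$ and $\nabla u=\gamma(\nabla X,\nu)+\gamma(X,\nabla\nu)$ with $|\nabla\nu|$ controlled by $|A|$, and $|A|$ blows up at most like $\varepsilon^{-1}$ on an asymptotically conical end, each boundary integrand is at worst $O(\varepsilon^{-1})$ and the total contribution is $O(\varepsilon^{m-2})$, which vanishes for $m\geq 3$. For the interior, $u$ and $Ric(\nu,\nu)$ are bounded, while $|A|^{2}\,d\mathscr{H}^{m}\sim r^{m-3}\,dr\,d\mathscr{H}^{m-1}_{P_{i}}$ on each end is integrable at $r=0$ for $m\geq 3$; dominated convergence then applies.

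I expect the main obstacle to be precisely the synchronous control of the blow-up of $|A|$ (hence of $|\nabla u|$) and the shrinking of the boundary area, to be carried out by pulling back to the model cone $\tilde{\gamma}_i=dr^{2}+r^{2}\gamma_{i}'$ via the diffeomorphism $\phi_i$ of Definition \ref{def:singman} and tracking the scalings carefully. The borderline case $m=2$ would require finer asymptotic expansions on the tangent cones in order to extract additional cancellations, but this is not needed for the applications in this paper, where the Clifford football is $3$-dimensional.
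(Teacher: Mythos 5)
Your argument reaches the right conclusion, but it proves more than the lemma asks — and does so without ever using the hypothesis that $\phi_t=\mathrm{id}$ outside a compact subset of $M$. Since $M$ is compact as a metric space, that hypothesis must be read as saying $\phi_t|_M=\mathrm{id}$ near the singular set $S$, so that $u=\gamma(X,\nu)$ is compactly supported in $\dot{M}$. With that in hand, the intended proof is essentially immediate: choose $\varepsilon$ small enough that $\mathrm{supp}\,u\subset M_\varepsilon:=\dot{M}\setminus\bigsqcup_i B_\varepsilon(p_i)$; then $\phi_t$ fixes $M\setminus M_\varepsilon$ so its contribution to $\mathscr{H}^m((\phi_t)_\# M)$ is constant in $t$, and the classical second variation formula applies on $M_\varepsilon$ with \emph{no} boundary terms, since $X$ vanishes identically near $\partial M_\varepsilon$. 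No asymptotic estimates near the cone tips — and no restriction $m\geq3$ — are required. The extension to $u\in\mathcal{W}^{1,2}_{\frac{2-m}{2}}$ is precisely the ``standard approximation argument'' the paper alludes to immediately after the lemma. Your exhaustion-plus-boundary-estimate route is genuinely different and actually yields a stronger statement: the formula continues to hold for deformations that do \emph{not} vanish near $S$, provided $m\geq 3$. The price is a complete accounting of all boundary pieces of the second variation on a hypersurface with boundary — besides $u\,\partial_\eta u$ one has contributions from $\gamma(D_X X,\eta)$, from the tangential component of $X$, and from the geometry of $\partial M_\varepsilon$ inside $M_\varepsilon$ — which your scaling heuristic ($O(\varepsilon^{-1})$ integrand on $O(\varepsilon^{m-1})$ boundary area) does handle uniformly, but which you sketch rather than write out. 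You correctly flag that $m=2$ is a genuine borderline your route misses, whereas the compact-support route is dimension-free; for this paper's application ($m=3$) either route suffices, and the compact-support hypothesis in the lemma is there precisely to sidestep the boundary analysis you undertake.
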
	

Of course, a standard approximation argument ensures the validity of such a conclusion whenever $u\in \mathcal{W}^{1,2}_{\frac{2-m}{2}}$.

\subsection{The Clifford football} We let $x=(x_{1},x_{2},x_{3},x_{4},x_{5})$ be Euclidean coordinates on $\mathbb{R}^{5}$ and $S^{4}\hookrightarrow\mathbb{R}^{5}$ be the unit sphere. If $G=O(2)\times O(2)$, one can consider the group action which is gotten by restriction to $S^{4}$ of the standard representation $\rho_G: G\to\mathbb{R}^{5}$ given by $\rho_G=\rho_{2}\oplus \rho_{2}'\oplus 1$. It is well-known that the associated orbit space, namely the quotient $S^{4}/G$ is geometrically a spherical lume that can be described in terms of planar polar coordinates as 
\[ S^{4}/\left(O(2)\times O(2)\right)=\left\{\left(r,\omega\right) | \ 0\leq r\leq\pi, 0\leq\omega\leq\pi/2\right\}
\]
and has an induced orbital distance metric of the form $dr^{2}+\sin^{2}r d\omega^{2}$. 

Throughout this article, we will denote by $M$ the preimage of the $\omega-$bisector, and namely of the set $\left\{\omega=\pi/4\right\}$ by the quotient map $\pi:S^{4}\to S^{4}/G$: it is then well-known \cite{Hsi83a} that $M$ is a three-dimensional, singular minimal hypersurface of $S^{4}$ with two isolated minimal singularities at the north and south pole of such ambient sphere. It is easily seen that the regular horizontal sections of $M$ (that are the intersections $M\cap \left\{x_{5}=\lambda\right\}$ for $\lambda\in (-1,1)$) are isometric to (suitably rescaled) Clifford tori and that, correspondingly, the blow-up of $M$ at both the north and the south pole of $S^{4}$ is the cone $C$ over the unit Clifford torus $T_{Clifford}^{2}\hookrightarrow S^{3}\hookrightarrow \left\{x_{5}=\pm 1\right\}\hookrightarrow\mathbb{R}^{5}$. Because of these remarks, we will call $M$ the \textsl{Clifford football} and $C$ the (unit) Clifford cone.

The regular part $\dot{M}$ of $M$ can be parametrized by means of four charts $F_{\pm\pm}: D_{\pm\pm}\to\mathbb{R}^{5}$ for $D_{\pm\pm}=I_{\pm}\times I_{\pm}\times (0,\pi)$ and $I_{+}=(-\pi,\pi), I_{-}=(0,2\pi)$ that are gotten by restriction of the covering map $F:\mathbb{R}^{2}\times\left(0,\pi\right)\to\mathbb{R}^{5}$ given by
\[
F(\phi,\psi,\theta)=\left(\frac{\sin\theta}{\sqrt{2}}\cos\phi, \frac{\sin\theta}{\sqrt{2}}\sin\phi, \frac{\sin\theta}{\sqrt{2}}\cos\psi, \frac{\sin\theta}{\sqrt{2}}\sin\psi, \cos\theta\right).
\]

Such parametrization determines tangent vectors
\[
\frac{\partial F}{\partial\phi}= F_{\ast}\left(\frac{\partial}{\partial\phi}\right)=\left(-\frac{\sin\theta}{\sqrt{2}}\sin\phi,\frac{\sin\theta}{\sqrt{2}}\cos\phi, 0, 0, 0\right) 
\]
\[
\frac{\partial F}{\partial\psi}= F_{\ast}\left(\frac{\partial}{\partial\psi}\right)=\left(0, 0, -\frac{\sin\theta}{\sqrt{2}}\sin\psi,\frac{\sin\theta}{\sqrt{2}}\cos\psi, 0\right) 
\]
\[
\frac{\partial F}{\partial\theta}= F_{\ast}\left(\frac{\partial}{\partial\theta}\right)=\left(\frac{\cos\theta}{\sqrt{2}}\cos\phi,\frac{\cos\theta}{\sqrt{2}}\sin\phi,\frac{\cos\theta}{\sqrt{2}}\cos\psi,\frac{\cos\theta}{\sqrt{2}}\sin\psi, -\sin\theta\right) 
\]
which are pairwise orthogonal, and hence can be normalized to give the unit frame:
\[
\tau_{1}=\left(-\sin\phi,\cos\phi, 0, 0, 0\right)
\]
\[
\tau_{2}=\left(0,0,-\sin\psi,\cos\psi, 0\right)
\]
\[
\tau_{3}=\left(\frac{\cos\theta}{\sqrt{2}}\cos\phi,\frac{\cos\theta}{\sqrt{2}}\sin\phi,\frac{\cos\theta}{\sqrt{2}}\cos\psi,\frac{\cos\theta}{\sqrt{2}}\sin\psi, -\sin\theta\right). 
\]

Throughout this section, we let $\nu:\dot{M}\to\mathbb{R}^{5}$ the Gauss map of $\dot{M}\hookrightarrow S^{4}$, which we will conveniently consider taking values in $\mathbb{R}^{5}$. Furthermore, we denote by $A$ the second fundamental form of $\dot{M}$. 

\subsection{The Jacobi operator of the Clifford football}\label{subs:Jacobi}

In this subsection we compute the Jacobi operator of the Clifford football. If $u\in\mathcal{C}^{2}(M)$, we shall denote here, in order to avoid ambiguities, $\overline{u}=u\circ F$.

\begin{lemma}
(Notations as above). The Jacobi operator of the Clifford football is given by
\[J_M u=\frac{2}{\sin^{2}\theta}\frac{\partial^{2}\overline{u}}{\partial\phi^{2}}+\frac{2}{\sin^{2}\theta}\frac{\partial^{2}\overline{u}}{\partial\psi^{2}}+\frac{\partial^{2}\overline{u}}{\partial\theta^{2}}+(2\cot\theta)\frac{\partial \overline{u}}{\partial\theta}+\left(3+\frac{2}{\sin^{2}\theta}\right)\overline{u}\].
\end{lemma}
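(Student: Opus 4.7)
The plan is to split the computation of $J_M u = \Delta_M u + (|A|^2 + Ric(\nu,\nu)) u$ into the three natural pieces, using the explicit parametrization $F$ to turn everything into elementary calculus in $\mathbb{R}^5$. All quantities descend to $\dot{M}$ because the formulas produced are invariant under the obvious deck transformations of $F$.

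First, I would compute the induced metric from the tangent vectors already recorded in the text: since $\partial F/\partial\phi$ and $\partial F/\partial\psi$ have squared norm $\sin^2\theta/2$ while $\partial F/\partial\theta$ has squared norm $\cos^2\theta/2+\cos^2\theta/2+\sin^2\theta=1$, and they are pairwise orthogonal, the pullback metric reads
\[
F^\ast g_{S^4}=\tfrac{\sin^2\theta}{2}\,d\phi^2+\tfrac{\sin^2\theta}{2}\,d\psi^2+d\theta^2.
\]
The volume density is therefore $\sqrt{\det g}=\sin^2\theta/2$, and the standard Laplace--Beltrami formula $\Delta_M u=\frac{1}{\sqrt{\det g}}\partial_i(\sqrt{\det g}\,g^{ij}\partial_j u)$ immediately yields the first four terms of the claimed expression, with the coefficient $2\cot\theta$ in front of $\partial_\theta\overline{u}$ arising from $\partial_\theta\log\sqrt{\det g}$.

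Next, I would determine a unit normal $\nu$ to $\dot M$ inside $S^4$ and extract the second fundamental form. A direct inspection of the defining condition $\omega=\pi/4$ suggests
\[
\nu=\tfrac{1}{\sqrt{2}}(\cos\phi,\sin\phi,-\cos\psi,-\sin\psi,0),
\]
which is a routine check to be orthogonal to $F$ and to each of the three tangent vectors. Since $F\cdot\nu=0$, the coefficients $h_{ij}=\langle \bar\nabla_{\partial_i}\partial_j,\nu\rangle$ of $A$ in coordinate frame coincide with $(\partial^2 F/\partial u_i\partial u_j)\cdot\nu$ computed in the ambient Euclidean $\mathbb{R}^5$. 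A short computation gives $h_{\phi\phi}=-\sin\theta/2$, $h_{\psi\psi}=\sin\theta/2$, $h_{\theta\theta}=0$ and all mixed coefficients vanish; this recovers the minimality $H=g^{ij}h_{ij}=0$ as a sanity check, and gives
\[
|A|^2=\bigl(g^{\phi\phi}\bigr)^2 h_{\phi\phi}^2+\bigl(g^{\psi\psi}\bigr)^2 h_{\psi\psi}^2=\frac{2}{\sin^2\theta}.
\]

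Finally, since $(S^{4},\gamma_0)$ has constant sectional curvature $1$, its Ricci tensor equals $3\gamma_0$ and hence $Ric(\nu,\nu)=3$ along $\dot M$. Assembling the three pieces produces exactly
\[
J_M u=\frac{2}{\sin^2\theta}\frac{\partial^2\overline u}{\partial\phi^2}+\frac{2}{\sin^2\theta}\frac{\partial^2\overline u}{\partial\psi^2}+\frac{\partial^2\overline u}{\partial\theta^2}+(2\cot\theta)\frac{\partial\overline u}{\partial\theta}+\Bigl(3+\frac{2}{\sin^2\theta}\Bigr)\overline u,
\]
as desired. The only point that requires mild care is the verification that the guessed $\nu$ is indeed a genuine unit normal (rather than an arbitrary vector in $\mathrm{span}\{\tau_1,\tau_2,\tau_3,F\}^\perp$ scaled incorrectly), but this is a one-line computation; the rest is entirely mechanical.
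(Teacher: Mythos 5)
Your proof is correct and decomposes $J_M$ into the same three pieces ($\Delta_M$, $|A|^2$, $Ric(\nu,\nu)$) as the paper, computing each from the explicit parametrization $F$. The only cosmetic differences are implementational: you obtain the Laplacian via the coordinate formula $\frac{1}{\sqrt{\det g}}\partial_i(\sqrt{\det g}\,g^{ij}\partial_j u)$ rather than the paper's frame computation $\sum_i(\nabla_{\tau_i}\nabla_{\tau_i}u-\nabla_{\nabla_{\tau_i}\tau_i}u)$, and you read off the second fundamental form from $(\partial^2 F/\partial u_i\partial u_j)\cdot\nu$ rather than from the Weingarten map $D_{\tau_i}\nu$; both routes give the stated result with essentially the same effort.
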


\begin{proof}

As we recalled above, we know that  $J_{M}u=\Delta_{M}u+\left(Ric(\nu,\nu)+|A|^{2}\right)$ so all we need to do is to compute the three summands explicitly.
First of all, since patently $\left\{\tau_{1},\tau_{2},\tau_{3},\nu\right\}$ is a positive orthonormal frame of $\mathbb{R}^{4}$ we have (referring the indices to that basis)
\[ \textit{Ric}(\nu,\nu)=\sum_{1\leq i\leq 4} R_{i4i4} = \sum_{1\leq i\leq 4}\left(g_{ii}g_{44}-g_{i4}g_{i4}\right)=3. 
\]
In order to compute the second fundamental form of $\dot{M}$ we observe that
\[ \nu=\left(\frac{1}{\sqrt{2}}\cos\phi, \frac{1}{\sqrt{2}}\sin\phi, -\frac{1}{\sqrt{2}}\cos\psi,-\frac{1}{\sqrt{2}}\sin\psi, 0\right)
\]
and hence, if we denote by $D$ the covariant derivative induced by the flat metric on $\mathbb{R}^{5}$ we get
\[
D_{\tau_{1}}\nu=\frac{1}{\sin\theta}\left(-\sin\phi,\cos\phi,0,0,0\right), \
D_{\tau_{2}}\nu=\frac{1}{\sin\theta}\left(0,0,\sin\phi,-\cos\phi,0\right), \
D_{\tau_{3}}\nu=0.
\]
At that stage, by projecting onto the tangent space of $\dot{M}$ at the point in question we obtain that the only non-zero terms of the matrix representing the second fundamental form $A$ with respect to the frame are
\[
A(\tau_{1},\tau_{1})=\frac{1}{\sin\theta}, \ \textrm{and} \ \  A(\tau_{2},\tau_{2})=-\frac{1}{\sin\theta}, 
\]
so that finally
\[
|A|^{2}=\frac{2}{\sin^{2}\theta}.
\]

As a third and final step, let us compute the Laplace-Beltrami operator. Making use, once again, of the frame $\left\{\tau_{1},\tau_{2},\tau_{3}\right\}$ defined above we have that $\Delta_{M}u=\sum_{1\leq i\leq 3}\nabla_{\tau_{i}}\nabla_{\tau_{i}}u-\nabla_{\nabla_{\tau_{i}}\tau_{i}}u$ so that clearly the first summand equals
\[ \sum_{1\leq i\leq 3}\nabla_{\tau_{i}}\nabla_{\tau_{i}}u = \frac{2}{\sin^{2}\theta}\frac{\partial^{2}\overline{u}}{\partial\phi^{2}}+\frac{2}{\sin^{2}\theta}\frac{\partial^{2}\overline{u}}{\partial\psi^{2}}+\frac{\partial^{2}\overline{u}}{\partial\theta^{2}}
\]
where we have used the convenient notation $\overline{u}(\phi,\psi,\theta)=u(F(\phi,\psi,\theta))$.
Concerning the torsion terms we get
\[
D_{\tau_{1}}\tau_{1}=-\frac{\sqrt{2}}{\sin\theta}\left(\cos\phi,\sin\phi,0,0,0\right), \
D_{\tau_{2}}\tau_{2}=-\frac{\sqrt{2}}{\sin\theta}\left(0,0,\cos\psi,\sin\psi,0\right)
\]
\[
D_{\tau_{3}}\tau_{3}=\left(-\frac{\sin\theta}{\sqrt{2}}\cos\phi, -\frac{\sin\theta}{\sqrt{2}}\sin\phi,-\frac{\sin\theta}{\sqrt{2}}\cos\psi, -\frac{\sin\theta}{\sqrt{2}}\sin\psi, -\cos\theta \right)
\]
and hence by projecting we find at once
\[
\nabla_{\tau_{1}}\tau_{1}=\nabla_{\tau_{2}}\tau_{2}=-(\cot\theta) \tau_{3}, \ \ \nabla_{\tau_{3}}\tau_{3}=0.
\]
Therefore, putting together the previous two equations, we conclude that
\[
\Delta_M u=\frac{2}{\sin^{2}\theta}\frac{\partial^{2}\overline{u}}{\partial\phi^{2}}+\frac{2}{\sin^{2}\theta}\frac{\partial^{2}\overline{u}}{\partial\psi^{2}}+\frac{\partial^{2}\overline{u}}{\partial\theta^{2}}+(2\cot\theta)\frac{\partial \overline{u}}{\partial\theta}
\]
and hence the claim follows at once.
\end{proof}

\subsection{Alencar and Hsiang desingularizations}\label{subs:reg}

We shall devote the first part of this subsection to the description of the desingularizations of the Clifford cone studied by Alencar in \cite{Ale93}. Following an approach that had already been successfully employed in \cite{BdGG69} in order to prove the area-minimizing property of Simons' cones, Alencar considered the class of minimal hypersurfaces in $\mathbb{R}^{m}\times\mathbb{R}^{m}$ that are invariant under the action of the group $G_m=O(m)\times O(m)$. The corresponding orbit space, in this case, is the first quadrant $\left\{(x,y)\in\mathbb{R}^{2} \ :\ x\geq 0, y\geq 0 \right\}$ and minimality in $\mathbb{R}^{2m}$ for the preimage $\pi_m^{-1} (\textrm{spt}(\sigma))$ corresponds to the requirement that the curve $\sigma(s)=(x(s),y(s))$ satisfies the second-order differential equation
\begin{equation}\label{eq:Ale}
x'(s)y''(s)-x''(s)y'(s)=(m-1)[(x'(s))^{2}+(y'(s))^{2}]\left(\frac{x'(s)}{y(s)}-\frac{y'(s)}{x(s)}\right).
\end{equation}
There are three different situations that may occur:
\begin{enumerate}
\item{the generating curve intersects perperdicularly one of the semi-axes of the orbit space;}	
\item{the generating curve does not intersect the boundary of the orbit space;}
\item{the generating curve passes through the origin of the orbit space.}	
\end{enumerate}	

The third case is well-understood and corresponds, in our setting, to the Clifford cone.

\begin{theorem}(Theorem 4.1 in \cite{Ale93})
Let $M^{2m-1}, \ m\geq 2$ be a minimal hypersurface of $\mathbb{R}^{2m}$ that is invariant under the action of $G_m$ and passes through the origin of $\mathbb{R}^{2m}$. Then $M^{2m-1}$ is (modulo an ambient isometry) the minimal	quadratic cone
\[
C_m=\left\{(X,Y)\in\mathbb{R}^{m}\times\mathbb{R}^{m} \ : \ |X|=|Y| \right\}.
\]
\end{theorem}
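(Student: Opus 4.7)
The $G_m$-invariance reduces the entire problem to the orbit space: the hypersurface $M^{2m-1}$ is the preimage under the quotient map of a smooth curve $\sigma(s)=(x(s),y(s))$ in the closed first quadrant $\{x\geq 0,\,y\geq 0\}$, and minimality is encoded by the ODE \eqref{eq:Ale}. Since the quadratic cone $C_m$ corresponds precisely to the diagonal ray $\{x=y\}$, the goal is to show that any solution of \eqref{eq:Ale} whose image contains the corner $(0,0)$ must lie entirely on this diagonal.

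First I would normalize by arc length and set $x'(s)=\cos\theta(s)$, $y'(s)=\sin\theta(s)$, turning \eqref{eq:Ale} into the first-order system
\[
\theta'(s)=(m-1)\left(\frac{\cos\theta(s)}{y(s)}-\frac{\sin\theta(s)}{x(s)}\right),\qquad x'=\cos\theta,\quad y'=\sin\theta.
\]
Next I would carry out the asymptotic analysis near the singular endpoint $s=0$, where by hypothesis $\sigma(0)=(0,0)$. Writing $x(s)=as+o(s)$ and $y(s)=bs+o(s)$ with $a^{2}+b^{2}=1$ and both $a,b>0$ (the axial cases $ab=0$ belong to Alencar's configurations (1)--(2), not (3)), the right-hand side of the $\theta$-equation has the leading term $(m-1)(a^{2}-b^{2})/(abs)$. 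Since $1/s$ is not integrable at $0$, this coefficient must vanish for $\theta$ to extend continuously to $s=0$, equivalently for $\sigma$ to admit a well-defined tangent at the origin; this forces $a=b=1/\sqrt{2}$. Hence the only admissible direction of departure from the corner is the diagonal, and one checks immediately that $\sigma_{0}(s)=s(1,1)/\sqrt{2}$ is a solution of \eqref{eq:Ale}, along which both sides vanish identically.

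What remains, and is the main obstacle, is to promote the coincidence of initial tangents into the coincidence of full curves, since standard ODE uniqueness does not apply at $s=0$ because of the $1/x$ and $1/y$ singularities of \eqref{eq:Ale}. My preferred route is to introduce the logarithmic variable $u(s)=\log(y(s)/x(s))$, which by the previous step extends continuously to $s=0$ with $u(0)=0$, and to derive from \eqref{eq:Ale} a desingularized second-order equation of the form $u''=F(s,u,u')$ with $F$ Lipschitz in $(u,u')$ uniformly for $s$ near $0$; a Gronwall comparison with the trivial solution $u\equiv 0$ then yields $u\equiv 0$, i.e.\ $x\equiv y$. As an alternative I would exploit the scale invariance $\sigma(s)\mapsto\lambda^{-1}\sigma(\lambda s)$ of \eqref{eq:Ale} to reformulate the problem on a compact two-dimensional phase space and invoke uniqueness of trajectories entering a rest point along a prescribed direction. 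Either route produces $\sigma\equiv\sigma_{0}$, whence $M^{2m-1}=C_{m}$ up to an ambient isometry.
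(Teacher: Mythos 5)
The paper does not prove this statement; it is quoted verbatim as Theorem 4.1 of \cite{Ale93} in Subsection \ref{subs:reg}, so there is no in-paper argument to compare against. Judged on its own terms, your sketch captures the correct overall architecture (orbit-space reduction, the ODE \eqref{eq:Ale}, asymptotics at the corner $(0,0)$, then a uniqueness step), but two links in the chain are not secured.

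First, you write $x(s)=as+o(s)$, $y(s)=bs+o(s)$ and then argue that the $1/s$ singularity in $\theta'$ forces $a=b$. But the existence of such an asymptotic expansion is precisely the existence of a tangent direction at the corner, and this is not automatic: a priori the generating curve could spiral in, with $\theta(s)$ unbounded as $s\to 0$, in which case the expansion you posit is false. Ruling this out needs an input you never invoke, e.g.\ the monotonicity formula for the stationary varifold $M$ (which produces tangent cones at $0$ as varifold limits of dilations) plus the elementary uniqueness of the $G_m$-invariant minimal hypersurface cone (only $\phi_0=\pi/4$ satisfies the ODE along a ray). As written, the argument assumes what has to be shown.

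Second, and more seriously, the uniqueness step is the heart of the theorem and neither route you offer is actually carried out. The ``preferred'' route asserts that $u=\log(y/x)$ satisfies a second-order equation $u''=F(s,u,u')$ with $F$ Lipschitz in $(u,u')$ uniformly near $s=0$; this is not checked, and I do not believe it holds, since the coefficients of \eqref{eq:Ale} genuinely blow up like $1/s$ at the corner (already $u'=\sin\theta/y-\cos\theta/x$ is a difference of quantities of size $1/s$, so control of $u'$ requires precisely the higher-order decay one is trying to prove). The ``alternative'' route via scale invariance is in fact the one that works, and you should promote it from an aside to the main argument: reparametrizing by $t=\log\sqrt{x^2+y^2}$ turns \eqref{eq:Ale} into the autonomous planar system $\dfrac{d\phi}{dt}=\tan(\theta-\phi)$, $\dfrac{d\theta}{dt}=(m-1)\dfrac{\cos(\theta+\phi)}{\cos\phi\sin\phi\,\cos(\theta-\phi)}$, whose rest point at $(\phi,\theta)=(\pi/4,\pi/4)$ has linearization with trace $-(2m-1)$ and determinant $4(m-1)$, hence both eigenvalues have negative real part for every $m\geq 2$. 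The rest point is therefore a hyperbolic sink for $t\to+\infty$, so the only orbit whose $\alpha$-limit set (i.e.\ limit as $t\to-\infty$, that is, as the radius tends to $0$) is the rest point is the rest point itself; combined with the tangent-cone input above, this forces $\phi\equiv\theta\equiv\pi/4$, i.e.\ the cone $C_m$. That is the argument you need to write out; the Gronwall/Lipschitz claim should be dropped.
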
	

The other two cases are fully classified when $m=2, 3$ which is enough for our purposes as we are dealing with the $m=2$ case.

\begin{theorem}(Theorem 1.1 in \cite{Ale93})\label{thm:Ale}
Let $M^{2m-1}, \ m=2, 3$ be a complete minimal hypersurface in $\mathbb{R}^{2m}\setminus\left\{0\right\}$ that is invariant under the action of $G$. Then:
\begin{enumerate}	
\item[(a)]{\underline{either} $M^{2m-1}$ is embedded and has the topological type of $\mathbb{R}^{m}\times S^{m-1}$;}
\item[(b)]{\underline{or} intersects itself infinitely often (i.e. the intersection set has infinitely many connected components) and has the topological type of $\mathbb{R}\times S^{m-1}\times S^{m-1}$.}	
\end{enumerate}	
Fouthermore, in both cases, the hypersurfaces intersect the cone $C_m$ outside any compact set and it is arbitrarily close to $C_m$.
\end{theorem}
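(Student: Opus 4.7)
The plan is to study the second-order ODE \eqref{eq:Ale} governing the generating curve $\sigma(s)=(x(s),y(s))$ in the first quadrant of the orbit space via a phase-plane reduction. First, I would parametrize $\sigma$ by arclength so that $(x'(s))^{2}+(y'(s))^{2}=1$ and write $x'(s)=\cos\vartheta(s)$, $y'(s)=\sin\vartheta(s)$, where $\vartheta$ is the angle the tangent makes with the $x$-axis. Since $x'y''-x''y'=\vartheta'(s)$, equation \eqref{eq:Ale} collapses to the first-order scalar relation
\[
\vartheta'(s)=(m-1)\Big(\frac{\cos\vartheta}{y}-\frac{\sin\vartheta}{x}\Big),
\]
which, combined with the position equations, gives a three-dimensional system. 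I would then exploit the dilation symmetry $(x,y)\mapsto(\lambda x,\lambda y)$ by passing to log-polar coordinates $x=e^{t}\cos\omega$, $y=e^{t}\sin\omega$ with $\omega\in(0,\pi/2)$, and reparametrize so as to get a genuinely autonomous planar ODE for $(\omega,\omega')$. The cone $C_m$ then corresponds to the unique interior equilibrium $(\omega,\omega')=(\pi/4,0)$.

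Next, I would linearize this planar system at the equilibrium. An explicit computation of the Jacobian (the same one that appears in the Bombieri--De Giorgi--Giusti analysis of Simons' cones) produces a characteristic polynomial whose roots form a complex-conjugate pair precisely when $m\leq 3$, and are real for $m\geq 4$; therefore in the cases $m=2,3$ under consideration the equilibrium is a spiral (focus). A Lyapunov functional built from the Hamiltonian-type first integral naturally attached to the rescaled equation then shows that no orbit can escape to infinity in $(\omega,\omega')$, and instead every orbit has the equilibrium as an $\omega$-limit (resp. $\alpha$-limit) along the relevant time direction. This is precisely what underlies the asymptotic closeness to $C_m$ asserted at the end of the statement.

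The third step is the case distinction based on how $\sigma$ meets the boundary of the orbit space. If $\sigma$ touches one of the semi-axes, say the $x$-axis, standard regularity for the singular ODE (together with the requirement that the resulting hypersurface be smooth across the subset of points fixed by one $O(m)$-factor) forces the contact to be orthogonal; even reflection across that axis then yields a complete embedded curve, and rotation by $G_m=O(m)\times O(m)$ produces a hypersurface of topological type $\R^{m}\times S^{m-1}$, which is case (a). If instead $\sigma$ stays in the open first quadrant for all parameter values, the spiral nature of the equilibrium forces $\omega$ to cross $\pi/4$ infinitely many times both as $t\to+\infty$ and as $t\to-\infty$; each such crossing corresponds to a connected component of the intersection of the hypersurface with $C_m$, and since both $x$ and $y$ stay strictly positive the topology is the product $\R\times S^{m-1}\times S^{m-1}$, which is case (b).

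The main obstacle is the global qualitative analysis of the autonomous planar system: one has to rule out, in case (b), closed orbits and heteroclinic phenomena, and to show that every orbit indeed accumulates onto the equilibrium so that the corresponding hypersurface is $C^{0}$-close to $C_m$ outside any compact set. It is precisely at this point that the dimensional hypothesis $m=2,3$ enters in an essential way, through the complex eigenvalues of the linearization; the rest of the argument reduces to a combination of ODE regularity at the axes and elementary topology of $O(m)$-invariant hypersurfaces.
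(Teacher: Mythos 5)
First, an important orientation: the paper itself does \emph{not} prove Theorem~\ref{thm:Ale}. It is a verbatim import of Alencar's Theorem~1.1 from~\cite{Ale93}, stated in Subsection~\ref{subs:reg} as background needed for the desingularization step. So there is no ``paper's proof'' to compare against; the relevant comparison is with Alencar's original argument (which, as you correctly anticipate, is in the lineage of the Bombieri--De~Giorgi--Giusti phase-plane analysis for Simons' cones). Measured against that, your sketch is following the right road: the arclength reduction of~\eqref{eq:Ale} to a first-order equation for the tangent angle $\vartheta$, the passage to scale-invariant variables to obtain an autonomous planar system, the identification of the Clifford cone with the interior equilibrium, and the dichotomy of the linearization at $m=4$ (complex eigenvalues for $m\leq 3$, real for $m\geq 4$: the discriminant of the indicial polynomial $s^{2}+(2m-3)s+(2m-2)$ is $(2m-3)^{2}-4(2m-2)$, which is negative exactly when $2m-2\leq 5$) are all correct and are the structural skeleton of Alencar's proof. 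The case split according to whether $\sigma$ meets a semi-axis, and the orthogonality of such a contact forced by smoothness across the fixed-point set of an $O(m)$-factor, also matches.

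There is, however, one concrete gap in the way you handle case~(b). You write that the spiraling of $\omega$ around $\pi/4$ produces infinitely many ``crossings,'' and that ``each such crossing corresponds to a connected component of the intersection of the hypersurface with $C_m$.'' That is indeed what proves the \emph{final} sentence of the theorem (asymptotic intersection with the cone $C_m$). But the defining assertion in case~(b) is different: the hypersurface \emph{intersects itself} infinitely often. In the orbit space this means the generating curve $\sigma$ returns to the same point of the open quadrant for two distinct arclength values $s_1\neq s_2$; a sign change of $\omega-\pi/4$ does not by itself produce such a return. To obtain genuine self-intersections you need to track the non-monotonicity of $r=|\sigma|$ along the spiral (equivalently, the component $\cos\psi$, with $\psi=\vartheta-\omega$, changing sign) and show that the plane curve $\sigma$ actually winds and crosses its own earlier arcs, not merely the ray $\{y=x\}$. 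Relatedly, your reduction to an autonomous planar ODE with $t=\log r$ as the time variable is only a local device: $t$ is not monotone in arclength in case~(b), so the correct autonomous reduction is in the scale-invariant pair $(\omega,\psi)$ with the arclength rescaled by $1/r$, and the fact that the resulting orbit approaches the outgoing-cone equilibrium while the original curve crosses itself is exactly the extra geometric content your sketch leaves implicit. Filling in this step would complete the argument.
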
	

Both cases actually occur, and we will be interested in (a).
Specifically, for $m=2$, one can fix a generating curve $\sigma:[0,\infty)\to \mathbb{R}^{2}$ of type (a) and let us assume, without loss of generality, that the parametrization is by arclength and $\sigma(0)=(1,0)$. It follows from Alencar's dicussion that the rescalings of the pre-image $E=\pi_2^{-1}(\textrm{spt}(\sigma))$ given by
$\lambda^{-1}E$
 converge  the the Clifford cone in the sense of varifolds as we let $\lambda\to\infty$. The convergence happens locally in the sense of smooth graphs away from the singuularities. Lastly, let us explicitly remark that for any $r>1$ the intersection $E\cap B_{r}$ is diffeomorphic to $D^{2}\times S^1$, namely a handle-body.

In the case of the four-sphere with the round metric $(S^4,\gamma_0)$, a similar ODE analysis was performed by Hsiang (see \cite{Hsi83a, Hsi83b}) in order to produce desingularizations of the Clifford football. However, his results are global and (as inticipated in the introduction) ensure the existence a sequence of embedded minimal hyperspheres that converge to $M$ uniformly away from the poles. From a local perspective, namely on small geodesic balls in $(S^4,\gamma_0)$ centered at the singularities of the Clifford football, the Hsiang regularizations can be seen as small perturbations of the Alencar regularizations. As a result, their properties mirror those of type (a) solutions of Theorem \ref{thm:Ale}.
The results we shall need in the sequel of this article are collected in the following statement.

\begin{theorem}(Theorem 1 in \cite{Hsi83a})\label{thm:Hsi}
For each positive odd integer $2i+1$, there exists a $G$-invariant, minimal embedding $E_i$ of $S^{3}$ into $S^4$ whose image curve $E_i/G$ is central symmetric with respect to the center point $(\pi/2,\pi/4)$ in $S^4/G$ and intersects with the bisector at eaxctly $2i+1$ points. Furthermore, the sequence $E_i$ converges to the Clifford football $M$ uniformly on any given compact set disjoint from the poles of $S^4$  in the sense of smooth graphs. Finally, the products $\sin(\theta)|A_i|$ (for $A_i$ the second fundamental form of $E_i$ in $(S^4,\gamma_0))$ are uniformly bounded independently of $i$.
\end{theorem}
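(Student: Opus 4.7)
My plan is to follow the equivariant reduction initiated by Hsiang--Lawson in \cite{HL71}. The $G=O(2)\times O(2)$ action on $S^{4}\subset \mathbb{R}^{5}$ is of cohomogeneity one with principal orbits diffeomorphic to $S^{1}\times S^{1}$, so by the principle of symmetric criticality the problem of finding $G$-invariant minimal hypersurfaces $\pi^{-1}(\sigma)$ reduces to finding geodesics $\sigma$ in the orbit space $S^{4}/G$ equipped with the volume-weighted metric
\[
\bar{g}=v(r,\omega)^{2}\bigl(dr^{2}+\sin^{2} r\, d\omega^{2}\bigr),\qquad v(r,\omega)=\tfrac{1}{2}\sin^{2} r\,\sin(2\omega),
\]
on the lune $\{0<r<\pi,\,0<\omega<\pi/2\}$. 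Writing the Euler--Lagrange equations gives a second-order ODE system for $\sigma(s)=(r(s),\omega(s))$ which is the spherical analogue of \eqref{eq:Ale}. The bisector $\{\omega\equiv\pi/4\}$ is one explicit solution and corresponds to the Clifford football $M$.

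Next, I would exploit the symmetries of the system: the two involutions $(r,\omega)\mapsto(\pi-r,\omega)$ and $(r,\omega)\mapsto(r,\pi/2-\omega)$ both preserve the ODE, so I would look for trajectories invariant under their composition, which is precisely the central symmetry about $(\pi/2,\pi/4)$. Concretely, I would launch trajectories $\sigma_{\omega_{0}}$ from $(\pi/2,\omega_{0})$ orthogonally to the segment $\{r=\pi/2\}$, for $\omega_{0}\in(0,\pi/4)$. Smoothness of the preimage $\pi^{-1}(\sigma)$ across the singular orbits (the arcs $\omega=0$ and $\omega=\pi/2$) forces perpendicular incidence with those arcs; when that condition holds the principal fiber collapses from $S^{1}\times S^{1}$ to $S^{1}$ and the lifted hypersurface closes up to an embedded $S^{3}$.

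The heart of the construction is then a shooting/counting argument. Let $N(\omega_{0})$ be the number of times $\sigma_{\omega_{0}}$ crosses the bisector before exiting the orbit space. Linearizing the ODE at $\omega=\pi/4$ produces a Jacobi-type oscillator whose frequency becomes unbounded as the trajectory approaches the bisector, so $N(\omega_{0})\to\infty$ as $\omega_{0}\nearrow\pi/4$; conversely $N(\omega_{0})$ stays bounded for $\omega_{0}$ bounded away from $\pi/4$. A continuity/intermediate-value argument in $\omega_{0}$ would then pick out countably many values $\omega_{0}^{(i)}$ for which the trajectory exits orthogonally at the opposite axis after exactly $2i+1$ crossings, and lifts (by the central symmetry) to the claimed embedded $E_{i}\simeq S^{3}$. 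Smooth convergence of $\sigma_{i}$ to the bisector on compact subsets of $(0,\pi)\times(0,\pi/2)$ as $i\to\infty$ follows from the same linearization, yielding the graphical convergence of $E_{i}$ to $M$ away from the poles.

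The main obstacle is the uniform curvature bound $\sup_{i}\|\sin\theta\,|A_{i}|\|_{\infty}<\infty$, which cannot be read off directly from the oscillation analysis because the generating curves develop unbounded curvature at the axes as $i\to\infty$. To handle it I would perform a blow-up analysis at the two poles: rescaling $(S^{4},\gamma_{0})$ by $(\sin\theta)^{-1}$ converts small spherical neighborhoods of each pole into Euclidean balls in $\mathbb{R}^{4}$, under which the tails of $E_{i}$ subsequentially converge to entire $G$-invariant minimal hypersurfaces in $\mathbb{R}^{4}\setminus\{0\}$. The topological constraint that each $E_{i}$ meets a small coordinate ball around the pole in a handle-body $D^{2}\times S^{1}$ forces the limit to be an Alencar solution of type (a) in Theorem \ref{thm:Ale}, whose second fundamental form is uniformly bounded after rescaling. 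A standard contradiction argument, combined with monotonicity to rule out concentration under the simultaneous $i\to\infty$ and blow-up limits, then yields the claimed uniform bound on $\sin\theta\,|A_{i}|$.
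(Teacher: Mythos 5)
The paper does not actually reprove this theorem: it cites Hsiang's \cite{Hsi83a} for the existence and convergence statements, and for the uniform bound on $\sin\theta\,|A_i|$ it only remarks (in one sentence) that the estimate follows from Hsiang's ODE analysis or, alternatively, from Alencar's classification via a blow-up/contradiction argument. Your reconstruction is therefore supplying detail that the paper omits, and the overall strategy you adopt — equivariant reduction to a geodesic problem in the orbit space, shooting with a crossing count, and a rescaling argument at the poles invoking Theorem \ref{thm:Ale} for the last statement — is indeed the one the paper and the cited literature have in mind.

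There is, however, a genuine error in your shooting setup. Launching $\sigma_{\omega_0}$ from $(\pi/2,\omega_0)$ with initial velocity orthogonal to $\{r=\pi/2\}$ (i.e.\ $\dot\omega(0)=0$) forces, by uniqueness for the ODE and invariance under $R_1:(r,\omega)\mapsto(\pi-r,\omega)$, the \emph{reflection} symmetry $r(s)+r(-s)=\pi$, $\omega(s)=\omega(-s)$. This is not the central symmetry about $(\pi/2,\pi/4)$: combining it with $C=R_1R_2$ would force $\omega\equiv\pi/4$, i.e.\ the Clifford football itself. Worse, under your reflection symmetry $\omega$ is an even function of $s$, so if the trajectory meets $\{\omega=0\}$ at some $s_1>0$ it does so again at $-s_1$; both endpoints then lie on the same singular arc, the \emph{same} $S^1$ factor of the torus orbit collapses at both ends, and the lift $\pi^{-1}(\sigma)$ is diffeomorphic to $S^2\times S^1$, not to $S^3$. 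To impose the central symmetry asserted in the theorem one should instead shoot from the center $(\pi/2,\pi/4)$ with variable initial angle $\theta_0$; the trajectory then automatically satisfies $\sigma(-s)=C\sigma(s)$, its two ends land on the two \emph{different} singular arcs $\{\omega=0\}$ and $\{\omega=\pi/2\}$, the orthogonal-incidence condition at one end becomes the single shooting condition, and the lift closes up to $S^3$. With that correction, the rest of your outline — the oscillation analysis near the bisector giving $N\to\infty$, the intermediate-value selection of the countable family, and the blow-up at the poles identifying the rescaled limit with a type~(a) Alencar end to obtain the uniform bound on $\sin\theta\,|A_i|$ — is consistent with the paper's one-line justification.
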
	

The first and second statement are proven in \cite{Hsi83a} (in particular the latter is remarked in Section 5, (3)). The third statement also follows from the ODE analysis by Hsiang, but can also be deduced from the results by Alencar arguing by contradiction by means of a blow-up argument. 

\section{Deformation theory}\label{sec:deform}

The scope of this section is to prove Theorem \ref{pro:deform}, which ensures the existence of deformed Clifford footballs in all those conformal classes that have nearly-round representatives.

\subsection{Bipolar conformal factors}\label{subs:bipo}

We shall start here by describing the construction of the conformal factors that enter into the definition of the map $\Xi$ and that, as a matter of fact, play a key role in our approach. 
We let from now onwards $\rho:M\to\mathbb{R}$ denote a fixed radius function for the Clifford football $M\hookrightarrow (S^{4},\gamma_0)$. For the sake of definiteness the reader might simply consider a smoothing of the function $\min\left\{d_{\gamma_0}(p_1,\cdot), d_{\gamma_0}(p_2,\cdot),1\right\}$ and in our case $\rho=\sin(\theta)$ is a natural choice. Furthemore, for $\varepsilon>0$ small enough we set 
\[
U=\left\{p\in S^4 \ : \ d_{\gamma_0}(p,M)<2\varepsilon\rho(p)\right\}
\]
where of course $d_{\gamma_0}(p,M)=\inf_{q\in M}d_{\gamma_0}(p,q)$.
Depending on such $\varepsilon$ we let $\chi=\chi_{\varepsilon}:\mathbb{R}\to\mathbb{R}$ be a smooth non-increasing function that equals $1$ for $t\leq\varepsilon$ and $0$ for $t\geq2\varepsilon$. We then define $\hat{\chi}:S^{4}\setminus\left\{p_1,p_2\right\}\to\mathbb{R}$ by $\hat{\chi}=\chi\circ \left(d_{\gamma_0}(M,\cdot)/\rho(\cdot)\right)$.
We observe that one can conveniently describe the points in the \textsl{conical neighbourhood} $U$ by means of a couple of coordinates $(z,s)$ where $z=z(\phi,\psi,\theta)$ parametrizes the Clifford football and $s$ is the signed distance from it.

If $B\in\mathbb{R}$ and $u\in \mathcal{W}^{k,2}_{\boldsymbol{\beta}}(M)$
and $\boldsymbol{\beta}=(\beta,\beta)$ with $\beta>1$, we consider the conformal factor
\[
Q(u)(z,s)=\left(1+\frac{Bsu(z)}{\sin^{2}(\theta)}\right)^{2}
\]
which can be seen to extend, by means of the cut-off function $\hat{\chi}$, to a $\mathcal{C}^{q,\alpha'}$ function on $S^4$ provided $\beta>1+\alpha'+q, \ k>3/2+q$  and $\|u\|_{\mathcal{W}^{k,2}_{\boldsymbol{\beta}}}$ is small enough. Here we have used the embedding Theorem \ref{thm:Sob}. In particular, given $\delta>0$ if we require 
\[
\|u\|_{\mathcal{W}^{k,2}_{\boldsymbol{\beta}}}<\frac{\delta}{2\varepsilon|B|}
\]
we can ensure that $Q(u)$ only attains values in the range $[(1-\delta)^{2},(1+\delta)^{2}]$ so that we can use it as a conformal factor to perturb a given Riemannian metric on $S^{4}$.
Any such function $Q(u)$ satisfies two important properties:
\begin{enumerate}
\item{$Q(u)=1$ identically on the Clifford football $M$;}
\item{$\gamma_0(\nabla Q(u),\nu)=\frac{2Bu(z)}{\sin^{2}(\theta)}$ on the Clifford football $M$.}	
\end{enumerate}	

\subsection{Idea of the approach}\label{subs:idea}

Given $q\geq 3, \ \boldsymbol{\beta}=(\beta,\beta)$ with $\beta>1+\alpha'+q$ and $B\in\mathbb{R}$ to be chosen in a suitable way as we are about to describe, we let $\hat{\Gamma}(q+1,\alpha)=\Gamma(q+1,\alpha)\cap B_{\delta}(\gamma_0)$ and $\hat{\mathcal{W}}=\mathcal{W}^{q+2,2}_{\boldsymbol{\beta}}(M)\cap B_{\delta/(2\varepsilon |B|)}(0)$. Correspondingly, we consider the map $\mathcal{M}:\hat{\Gamma}\times\hat{\mathcal{W}}\to \mathcal{W}^{q,2}_{\boldsymbol{\beta}-2}(M)$ given by
\[
\mathcal{M}(\gamma,u)=(Q(u))^{-1/2}\left(H_{\gamma}(u)+\frac{1}{2}\nabla^{\gamma}_{\nu}\log Q(u)\right)
\] 
where $H_{\gamma}(u)$ denotes the mean curvature, with respect to the Riemannian metric $\gamma$ on $S^{4}$ of the normal graph over the Clifford football defined by the function $u$.
In geometric terms, this functional gives the mean curvature of such graph with respect to the conformally deformed metric $Q(u)\gamma$.
It is readily checked that the map $\mathcal{M}:\hat{\Gamma}\times\hat{\mathcal{W}}\to \mathcal{W}^{q,2}_{\boldsymbol{\beta}-2}(M)$ is $\mathcal{C}^{1}$ in the sense of Calculus in Banach spaces, and its partial derivative with respect to the \textsl{second} slot evaluated at the point $\gamma=\gamma_0, u=0$ is given by
\[
\mathcal{M}_{u}(\gamma_0,0)[v]=J_M v+B\sin^{-2}(\theta)v
\] 
where we have made use of the property (1) stated in the previous subsection. Using the explicit expression that has been derived for the Jacobi operator of the Clifford football in Subsection \ref{subs:Jacobi} and property (2) we get
\[
\mathcal{M}_{u}(\gamma_0,0)[v]=\frac{2}{\sin^{2}\theta}\frac{\partial^{2}\overline{v}}{\partial\phi^{2}}+\frac{2}{\sin^{2}\theta}\frac{\partial^{2}\overline{v}}{\partial\psi^{2}}+\frac{\partial^{2}\overline{v}}{\partial\theta^{2}}+(2\cot\theta)\frac{\partial \overline{v}}{\partial\theta}+\left(3+\frac{(2+B)}{\sin^{2}\theta}\right)\overline{v}.\]

\subsection{Analysis of the singular Jacobi operator}

We are ready to prove Theorem \ref{pro:deform}.

\begin{proof}
For $q, \boldsymbol{\beta}$ as above, we claim that one can determine the constant $B$ in a way that the linearized operator $\mathcal{M}_{u}(\gamma_0,0):\mathcal{W}^{q+2,2}_{\boldsymbol{\beta}}(M)\to\mathcal{W}^{q,2}_{\boldsymbol{\beta}-2}(M)$ is a Banach space isomorphism. More specifically, we claim that has to be the case once we set $B=-2(1+b^{2})$ for 
\[
b>b_{\ast}:=\max\left\{\sqrt{\frac{3}{2}};\sqrt{\frac{1}{2}\left(\beta+\frac{1}{2}\right)^{2}-\frac{1}{8}}\right\}
\]
and in fact we shall set $b=2b_{\ast}$ for the sake of definiteness. First of all, it is clear that in this range the operator $\mathcal{M}_{u}(\gamma_0,0)$ has to be injective. Indeed, let $\mathcal{M}_{u}(\gamma_0,0)[v]=0$: since $\beta>0$ (and $u\in\mathcal{C}^{3}$) we know that $v$ decays on approach to the singular points of the Clifford football, so (if it is not identically zero) then possibly changing its sign we also know that $v$ attains a global maximum on $\dot{M}$ and hence a standard application of the maximum principle on a relatively compact subdomain of $\dot{M}$ (where $\mathcal{M}_{u}(\gamma_{0},0)$ is uniformly elliptic and has uniformly bounded coefficients) implies that the operator is injective. Here we have used the assumption that $b>\sqrt{\frac{3}{2}}$. Let us now discuss the surjectivity.  To this end, for the sake of clarity let us denote $T=\mathcal{M}_{u}(\gamma_0,0):\mathcal{W}^{q+2,2}_{\boldsymbol{\beta}}(M)\to\mathcal{W}^{q,2}_{\boldsymbol{\beta}-2}(M)$ and its dual by $T^{\ast}:\mathcal{W}^{-q,2}_{-\boldsymbol{\beta}-1}(M)\to\mathcal{W}^{-q-2,2}_{-\boldsymbol{\beta}-3}(M)$, where we have used the well-known identification $\left(\mathcal{W}^{k,p}_{\boldsymbol{\beta}}\right)^{\ast}\simeq \mathcal{W}^{-k,p'}_{-\boldsymbol{\beta}-m}$ for $m$ the dimension of the underlying conifold (cmp. for instance Section 7 and Section 9 in \cite{Pac13}).
Observe that $T$ is formally self-adjoint so we have at once that for $\beta>0$ the operator $T^{\ast}$ has to be surjective. Now, it follows from Lockhart-McOwen theory (see \cite{LMc85, Loc87}) that the operator $T:\mathcal{W}^{q+2,2}_{\boldsymbol{\beta}}(M)\to\mathcal{W}^{q,2}_{\boldsymbol{\beta}-2}(M)$ is Fredholm provided $\beta$ is \textsl{not} an indicial root, which by standard separation of variables (as in \cite{CHS84}) reduces the issue to checking that $\beta$ is \textsl{not} a root of the polynomial
\[
 P_{p,q}(t)=t^{2}+t-2(p^{2}+q^{2}+b^{2}) \ \ \textrm{for any} \ (p,q)\in\mathbb{Z}_{\geq 0}\times\mathbb{Z}_{\geq 0}.
 \]
 It is then straightforward to check that our assumption on $b$ implies that both $T$ and $T_{\ast}$ (which is nothing but the \textsl{same} operator, acting between different Banach spaces) are Fredholm. Furthermore, it is also a standard result (the reader may wish to consult Theorem 7.9 in \cite{Pac13} for a precise statement) that the \textsl{difference} of the Fredholm indices of $T$ and $T^{\ast}$ (which we shall denote by $\textbf{FI}(T)$ and $\textbf{FI}(T^{\ast})$ respectively) is given by the weight-crossing formula
	\[
	\textbf{FI}(T)-\textbf{FI}(T^{\ast})=\textbf{FI}_{\boldsymbol{\beta}}(T)-\textbf{FI}_{-\boldsymbol{\beta}-1}(T)=\sum_{\underline{\zeta}\in\mathcal{D}_{T_{\infty}}, -\boldsymbol{\beta}-1\leq\underline{\zeta}\leq\boldsymbol{\beta}}\textbf{m}_{T_{\infty}}(\underline{\zeta})
	\] 
where the right-hand side accounts for the dimensions of the eigenspaces associated to the indicial roots between the weights $-\boldsymbol{\beta}-1$ and $\boldsymbol{\beta}$. In our case, the requirement that $b>\sqrt{\frac{1}{2}\left(\beta+\frac{1}{2}\right)^{2}-\frac{1}{8}}$ is equivalent to $\beta<-\frac{1}{2}+\sqrt{\frac{1}{4}+2b^{2}}$ as well as $-\beta-1>-\frac{1}{2}-\sqrt{\frac{1}{4}+2b^{2}}$ and hence the formula in question implies that $\textbf{FI}(T)=\textbf{FI}(T^{\ast})$. On the other hand, we already know that $\textbf{FI}(T)\leq 0$ (by injectivity) and $\textbf{FI}(T^{\ast})\geq 0$ (by surjectivity) so we conclude $\textbf{FI}(T)=0$, which means that $\mathcal{M}_{u}(\gamma_{0},0):\mathcal{W}^{q+2,2}_{\boldsymbol{\beta}}(M)\to\mathcal{W}^{q,2}_{\boldsymbol{\beta}-2}(M)$ is an isomorphism. As a result, we are in position to apply the Implicit Function Theorem in order to produce an arc of solutions of the nonlinear equation $\mathcal{M}(\gamma,u)=0$ in a neighbourhood of $(\gamma_0,0)$.  Hence, we are granted the existence of neighbourhoods $\mathcal{U}$ of $\gamma_0$ and $\mathcal{W}$ of the \textsl{zero} function in $\mathcal{W}^{q+2,2}_{\boldsymbol{\beta}}(M)$, and of a $\mathcal{C}^{1}$ map $\Omega:\mathcal{U}\to\mathcal{W}$ such that $\mathcal{M}(\gamma,\Omega(\gamma))=0$ identically in a neighbourhood of $\gamma_0$ (and this is, locally, a parametrization of \textsl{all} the solutions of such equation). Letting $\mathcal{V}$ be the image of $\mathcal{U}$ via the map $\Xi(\cdot)$ defined by $\Xi(\gamma)=Q(\Omega(\gamma))\gamma$ the proof is complete.
	
\end{proof}	

\section{Desingularization theory}\label{sec:desing}

In this Section we will prove Theorem \ref{thm:main} and Theorem \ref{thm:main2} by desingularizing the perturbed Clifford football that have been produced above (Theorem \ref{pro:deform}).  

\subsection{Coarse interpolation}\label{subs:coarse}

Let $\gamma\in\mathcal{U}$ a \textsl{fixed} Riemannian metric: we already know that the normal graph over the Clifford football determined by the function $\Omega(\gamma)$ is minimal with respect to the conformally deformed metric $Q(\Omega(\gamma))\gamma$: its closure, which we shall denote by $M_{\Omega(\gamma)}$ is a minimal submanifold with two conical singularities in the sense of Definition \ref{def:singman}.  We also observe that, by the way our construction has been performed, the tangent cones to $M_{\Omega(\gamma)}$ at the poles are Clifford cones, in fact the same as for the Clifford football $M$.

\

Let us denote by $p_1$ (respectively $p_2$) the north (resp. south) pole of the sphere $S^4$. For each of them (and, for the sake of clarity, let us agree to work with the north pole) let $\left\{w\right\}$ be a system of geodesic normal coordinates on $B_{\eta_2}(p_1)$ for some small $\eta_2$ to be determined later. Without loss of generality (possibly by acting via a Euclidean isometry) we can assume that the Hsiang regularizations converge (in varifold sense) to the tangent cone of $M$ at the poles.
Let $\xi:\mathbb{R}^{4}\to\mathbb{R}$ be a smooth, radial cut-off function that equals one on the ball of radius $\eta_2/2$ and zero outside of the ball of radius $\eta_2$: using those local coordinates we can perform a coarse interpolation of $M_{\Omega(\gamma)}$ and $E_i$, thereby obtaining a closed four-manifold $\tilde{M}_{\eta_1,\eta_2}$ which coincides with the former outside of the balls of radius $\eta_2$ around each pole and instead coincides with the latter inside the balls of radius $\eta_2/2$. For purely notational convenience, we have introduced the \textsl{discrete} parameter $\eta_1$ as a replacement for the index $i$: $\eta_1$ corresponds to the distance from the origin to the complete sumbmanifold $E_i$ and controls, at the same time, the order of its convergence to the Clifford cone.  Of course, we can perform the interpolation because both summands of the connected sum in question are described by normal graphs over the Clifford cone $\left\{w_1^{2}+w_2^{2}=w_3^{2}+w_4^{2}\right\}\subset\mathbb{R}^{4}$ (at least in the annulus $B_{\eta_2}\setminus B_{\eta_2/2}$). The resulting four-manifold is obviously minimal (with respect to $\Xi(\gamma)=Q(\Omega(\gamma))\gamma$) away from the balls around the conical singularities, while it will not be inside. However, as we let $\eta_1\to 0$ (for fixed, small $\eta_2$) the mean curvature of $\tilde{M}_{\eta_1,\eta_2}$ inside those spherical caps will converge to zero, which is the \textsl{heuristic} idea that motivates the iterative scheme we are about to describe. 

In order to write the Schauder estimates we need in a convenient fashion, namely with constants that do not depend on the gluing parameters $\eta_1, \eta_2$ it is useful to consider on $\tilde{M}_{\eta_1,\eta_2}$ \textsl{extrinsincally weighted} functional spaces. In particular, for $\beta\in\mathbb{R}$ we shall consider on $\mathcal{C}^{k,\alpha'}(\tilde{M}_{\eta_1,\eta_2})$ the norm
\[
\left\|u\right\|_{\mathcal{C}^{k,\alpha'}_{\beta}}:=\sum_{j=0}^{k}\sup_{\tilde{M}_{\eta_1,\eta_2}}\rho^{-\beta+j}|\nabla^{j}u| + \sup_{x\neq y \in \tilde{M}_{\eta_1,\eta_2}}\frac{|\rho^{-\beta+k}(x)\nabla^{k}u(x)-\rho^{-\beta+k}(y)\nabla^{k}u(y)|}{d(x,y)^{\alpha'}}.
\]	
Of course, for fixed values of $\eta_1,\eta_2$ such norm is patently equivalent to the standard $\mathcal{C}^{k,\alpha}$-norm on $\tilde{M}_{\eta_1,\eta_2}$. However, such equivalent is not, by any means, uniform in the gluing parameters $\eta_1,\eta_2$ and thus this setup will simplify our discussion.

\subsection{Setting up the problem.}
Similarly to what we had done in Subsection \ref{subs:bipo}, we shall introduce here a suitable conformal deformation of the mean curvature operator. To that aim, we perform the following constructions:
\begin{itemize}
\item{\textbf{Shrinking tubular neighborhoods:} for $\varepsilon>0$ a small parameter, we consider tubular neighborhoods $\tilde{U}_{\eta_1,\eta_2}$ of $\tilde{M}_{\eta_1,\eta_2}$ in $(S^4,\tilde{\gamma})$ whose width around some $p\in \tilde{M}_{\eta_1,\eta_2}$ is of order $2\varepsilon\rho(p)$ (here $\tilde{\gamma}$ is a metric on $S^4$ that is very close to $\gamma_0$, and a posteriori we will set $\tilde{\gamma}=\Xi(\gamma)$ as provided by Theorem \ref{pro:deform}
and $\rho$ is a radius function for the background metric we are working with, that is $\tilde{\gamma}$).}	
\item{\textbf{Conformal factors:} any such tubular neighborhood is patently diffeomorphic to the product $\tilde{M}_{\eta_1,\eta_2}\times [0,1)$ and hence we can introduce coordinates $(z,s)$ in the obvious way. Similarly to what we had done in Subsection \ref{subs:bipo}, given a function $u\in \mathcal{C}^{k,\alpha'}_{\beta}(\tilde{M}_{\eta_1,\eta_2})$ we can define on $\tilde{U}_{\eta_1,\eta_2}$ the conformal factor
	\[
	\tilde{Q}_{\eta_1,\eta_2}(u)(z,s)=\left(1+\frac{Bsu(z)}{\rho^2}\right)^2
	\]
where $B\in\mathbb{R}$ is as above, namely $B=-1-b^2$ for $b>\tilde{b}_{\ast}$ to be specified below. It is readily seen that such factor can be extended to a $\mathcal{C}^{q,\alpha'}$ function on the whole ambient manifold by means of a cut-off function provided $\beta>1+\alpha'+q, \ k\geq q$ and, furthermore, if the norm of $u$ is small enough, namely $\left\|u\right\|_{\mathcal{C}^{k,\alpha'}_{\beta}}<\delta/(2\varepsilon|B|)$ we are ensured that $\tilde{Q}_{\eta_1,\eta_2}(u)$ only attains values in the range $[(1-\delta)^2,(1+\delta)^2]$ and thus we may legitimately use it as a conformal factor. Once again, there are two key properties of $\tilde{Q}$ we shall exploit:
\begin{enumerate}
\item[$\tilde{(1)}$]{$\tilde{Q}_{\eta_1,\eta_2}(u)=1$ identically on $\tilde{M}_{\eta_1,\eta_2}$;}
\item[$\tilde{(2)}$]{$\tilde{\gamma}(\nabla \tilde{Q}_{\eta_1,\eta_2},\nu)=\frac{2Bu(z)}{\rho^2}$ on the closed surface $\tilde{M}_{\eta_1,\eta_2}$ (we are going to set $\tilde{\gamma}=\Xi(\gamma)$ so that such surface is minimal in metric $\Xi(\gamma)$).}	
\end{enumerate}	
}
\item{\textbf{Conformal mean curvature map:} given these constructions, and keeping in mind that we shall later take $\tilde{\gamma}=\Xi(\gamma)$ we will then consider the map
\[
\tilde{\mathcal{M}}(\tilde{\gamma},u)=\left(\tilde{Q}_{\eta_1,\eta_2}(u)\right)^{-1/2}\left(H_{\tilde{\gamma}}(u)+\frac{1}{2}\nabla^{\tilde{\gamma}}_{\nu}\log Q_{\eta_1,\eta_2}(u)\right)
\]	
where $H_{\tilde{\gamma}}(u)$ is the mean curvature (in $(S^4,\tilde{\gamma})$) of the normal graph over $\tilde{M}_{\eta_1,\eta_2}$ with defining function $u$ and $\nu$ is the unit normal to such graph (with respect to the metric $\tilde{\gamma}$). This map describes the mean curvature of such submanifold with respect to the conformally perturbed metric $Q(u)\tilde{\gamma}$. We shall then consider $\tilde{\mathcal{M}}:\mathcal{V}\times \tilde{\mathcal{C}}\to \mathcal{C}^{q-2,\alpha'}_{\beta-2}$ (where $\mathcal{V}$ is provided by Theorem \ref{pro:deform}, while $\tilde{\mathcal{C}}=\mathcal{C}^{q,\alpha'}_{\beta}(\tilde{M}_{\eta_1,\eta_2})\cap B_{\delta/(2\varepsilon |B|)}(0)$): this map is $\mathcal{C}^1$ and its partial derivative, with respect to the second argument, evaluated at the point $(\tilde{\gamma},0)$ is given by
\[
\tilde{\mathcal{M}}_{u}(\tilde{\gamma},0)[v]=J_{\tilde{M}_{\eta_1,\eta_2}}v+B\rho^{-2}v.
\]
For simplicity of notation, let us set from now onwards $L[v]:=\tilde{\mathcal{M}}_{u}(\tilde{\gamma},0)[v]$ where the dependence of the operator on the metric $\tilde{\gamma}$ and on the gluing parameters $\eta_1,\eta_2$ is left implicit.
}	
\end{itemize}	



\subsection{Solvability of the linear problem}

\begin{lemma}\label{lem:Schauder} 
Given any $q\geq 3, \alpha'\in (0,1/2), \beta>2$ and $\tilde{\gamma}\in\mathcal{V}$ (as per Theorem \ref{pro:deform}), there exist positive constants $\tilde{b}_{\ast}$ and $C$ (both independent of $\eta_1,\eta_2$ small enough) such that if we let $b>\tilde{b}_\ast$ then the operator	$L:\mathcal{C}^{q,\alpha'}_{\beta}\to\mathcal{C}^{q-2,\alpha'}_{\beta-2}$ is a uniformly coercive operator, namely
\[
\left\|v\right\|_{\mathcal{C}^{q,\alpha'}_{\beta}}\leq C \left\|Lv\right\|_{\mathcal{C}^{q-2,\alpha'}_{\beta-2}}.
\]
\end{lemma}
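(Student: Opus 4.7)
The plan is to argue by contradiction via a blow-up/compactness scheme, exploiting the injectivity already established for the singular Clifford football in the proof of Theorem \ref{pro:deform} and combining it with scale-invariant injectivity at the two model geometries (the Clifford cone and the complete Alencar desingularization $E_\infty$). Suppose the conclusion fails: then there exist sequences $\eta_1^n,\eta_2^n\to 0$ and $v_n\in\mathcal{C}^{q,\alpha'}_{\beta}(\tilde{M}_{\eta_1^n,\eta_2^n})$ with $\|v_n\|_{\mathcal{C}^{q,\alpha'}_{\beta}}=1$ and $\|Lv_n\|_{\mathcal{C}^{q-2,\alpha'}_{\beta-2}}\to 0$. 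Pick points $x_n$ where $\rho^{-\beta}(x_n)|v_n(x_n)|$ is at least half the weighted supremum; by an interior Schauder estimate applied on balls of radius comparable to $\rho(x_n)$ in coordinates rescaled by $\rho(x_n)$ (which renders the coefficients of $L$ uniformly bounded), it suffices to contradict the corresponding $\mathcal{C}^0$-type estimate.

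I would then distinguish three regimes for $x_n$: (a) $\rho(x_n)$ bounded away from zero; (b) $\rho(x_n)\to 0$ while $\rho(x_n)/\eta_1^n\to\infty$; (c) $\rho(x_n)/\eta_1^n$ bounded. In (a), the surfaces $\tilde{M}_{\eta_1^n,\eta_2^n}$ converge smoothly on compacts of the regular part to $M_{\Omega(\gamma)}$, and the normalised $\tilde v_n = v_n/(\rho^{-\beta}(x_n)|v_n(x_n)|)$ subconverge to a nonzero element of the kernel of the singular operator $T=J_{M_{\Omega(\gamma)}}+B\rho^{-2}$, contradicting the injectivity of $T$ granted by the choice $b>b_\ast$ in Theorem \ref{pro:deform}. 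In (b), rescale by $\lambda_n=\rho(x_n)$: the rescaled surfaces collapse to the Clifford cone $C$, the rescaled operator to the scale-invariant $J_C+B\rho^{-2}$, and separation of variables together with the fact that $\beta$ lies strictly between two consecutive indicial roots (a consequence of $b>\sqrt{(\beta+1/2)^2/2-1/8}$) forces any polynomially-bounded limit to vanish. In (c), rescale by $\lambda_n=\eta_1^n$: by Theorem \ref{thm:Ale} the rescaled surfaces converge to the complete Alencar desingularization $E_\infty$ of type (a) asymptotic to $C$ at infinity, and the uniform estimate $\sin(\theta)|A_i|=O(1)$ of Theorem \ref{thm:Hsi} ensures that the limit operator $J_{E_\infty}+B\rho^{-2}$ is uniformly elliptic with integrable lower-order coefficients at infinity.

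The constant $\tilde b_\ast$ would then be forced to be the maximum of $b_\ast$ from Theorem \ref{pro:deform} and of the thresholds required to clear the indicial-root windows of both $C$ and $E_\infty$ inside the admissible range of weights. To my mind the main obstacle is regime (c): one must show that $J_{E_\infty}+B\rho^{-2}$ admits no nontrivial solution that is bounded on compact sets and decays like $\rho^\beta$ at infinity. The argument is structurally parallel to the Lockhart--McOwen analysis used in Theorem \ref{pro:deform}, namely Fredholmness of the operator on weighted spaces away from indicial roots, the weight-crossing index formula, and injectivity via maximum principle plus formal self-adjointness, but it requires sharp control on the asymptotics of $E_\infty$ towards $C$ so that the indicial roots at infinity coincide with those of the cone. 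Once a contradiction is reached in each of the three regimes, the weighted coercivity follows with a constant $C$ independent of the gluing parameters $\eta_1,\eta_2$, and the full weighted Schauder estimate stated in the lemma is obtained by combining it with the interior regularity mentioned in the first step.
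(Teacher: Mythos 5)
Your proposal takes a genuinely different and substantially heavier route than the paper. The paper does not blow up at all: it writes down the standard weighted Schauder estimate
\[
\left\|v\right\|_{\mathcal{C}^{q,\alpha'}_{\beta}}\leq C\left(\left\|v\right\|_{\mathcal{C}^{0}_{\beta}}+ \left\|Lv\right\|_{\mathcal{C}^{q-2,\alpha'}_{\beta-2}}\right),
\]
with a constant $C$ uniform in the gluing parameters because the extrinsically weighted norms are designed exactly for that, and then kills the zeroth-order term by a direct maximum-principle (barrier) argument. The whole point is that the constant $B$ is free: one picks $\tilde b_\ast$ so that $\rho^2\bigl(|\tilde A_{\eta_1,\eta_2}|^2+Ric(\nu,\nu)\bigr)\le \tilde b_\ast^2$ uniformly (this is precisely what the last assertion of Theorem~\ref{thm:Hsi} supplies), takes $b$ sufficiently larger than $\tilde b_\ast$ and larger than a threshold depending only on $\beta$ (to absorb the first-order commutator terms coming from $\Delta(\rho^{-\beta}v)$), and evaluates at the point where $\rho^{-\beta}|v|$ is maximal: there $\nabla(\rho^{-\beta}v)=0$ and $\Delta(\rho^{-\beta}v)\le 0$ give, after two lines, $\sup\rho^{-\beta}|v|\le\sup\rho^{-\beta+2}|Lv|$. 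This is elementary, requires no compactness, and is where the uniformity in $\eta_1,\eta_2$ actually comes from.

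Your blow-up strategy is structurally sound as an outline, and regimes (a) and (b) are plausible modulo the usual compactness bookkeeping, but regime (c) is a genuine gap and you correctly flag it as such. You need a Liouville statement for $J_{E_\infty}+B\rho^{-2}$ on the complete Alencar desingularization in a weighted space allowing growth $\rho^\beta$ at infinity; nothing in the paper proves this, and establishing it would essentially require the same barrier argument applied on $E_\infty$ (again via the bound $\rho^2(|A_{E_\infty}|^2)\le\tilde b_\ast^2$ from Theorem~\ref{thm:Hsi}), at which point the blow-up scaffolding is unnecessary overhead. The blow-up route is the right tool when the zeroth-order coefficient is geometrically prescribed and one cannot simply make it negative enough; here the conformal trick makes $B$ a free parameter, so the direct barrier is both shorter and logically cleaner. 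If you do pursue the blow-up version, you also need to be explicit that the rescaled weight $\rho(x_n)^{-1}\rho$ converges to the Euclidean distance on the cone in regime (b) and to (roughly) the distance to the core on $E_\infty$ in regime (c), since the operator you pass to the limit depends on this weight and not only on the geometry.
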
	

\begin{proof}
 We know that the operator $L:\mathcal{C}^{q,\alpha'}_{\beta}\to\mathcal{C}^{q-2,\alpha'}_{\beta-2}$ is uniformly elliptic (since its principal symbol is that of the Laplace operator on $\tilde{M}_{\eta_1,\eta_2}$) and in our setting the weighted Schauder estimates take the form
\[
\left\|v\right\|_{\mathcal{C}^{q,\alpha'}_{\beta}}\leq C\left(\left\|v\right\|_{\mathcal{C}^{0}_{\beta}}+ \left\|Lv\right\|_{\mathcal{C}^{q-2,\alpha'}_{\beta-2}}\right)
\]	
for a constant $C$ that does not depend on the gluing parameters. The key claim is that, for a suitable choice of $b$ necessarily
\[
\left\|v\right\|_{\mathcal{C}^{0}_{\beta}}\leq \left\|Lv\right\|_{\mathcal{C}^{0}_{\beta-2}}
\]
which would immediately imply the conclusion. Let us pick $b>2\tilde{b}_{\ast}$ where $\tilde{b}_{\ast}$ is chosen so that 
\[
|\tilde{A}_{\eta_1,\eta_2}|^2+Ric(\nu,\nu)\leq \frac{\tilde{b}_{\ast}^2}{\rho^2} \ \ \textrm{on} \ \tilde{M}_{\eta_1,\eta_2}
\]
which we can do (uniformly in $\eta_1$, for $\eta_2$ sufficiently small) because of the last assertion in Theorem \ref{thm:Hsi}. Then, our claim would be implied by showing that in fact

\[
 \sup_{\tilde{M}_{\eta_1,\eta_2}}\left| \rho^{-\beta+2}\left(\Delta v-3\tilde{b}_{\ast}^2\rho^{-2}v\right)\right|\geq \sup_{\tilde{M}_{\eta_1,\eta_2}}\left| \rho^{-\beta} v\right|.
\]
To that aim, let us consider a point $\tilde{x}_{\ast}\in\tilde{M}_{\eta_1,\eta_2}$ where the value attained by the quantity $\left| \rho^{-\beta}v\right|$ is maximum. Without loss of generality we can assume that $v(\tilde{x}_\ast)>0$ for otherwise the argument is symmetric. Now, we know that $\Delta(\rho^{-\beta}v)(\tilde{x}_{\ast})\leq 0$ as well as $\nabla(\rho^{-\beta}v)(\tilde{x}_{\ast})=0$ (which allows to express $\nabla v(\tilde{x}_{\ast})$ in terms of $v(\tilde{x}_{\ast})$) and thus we have the chain of inequalities 
\[
\rho^{-\beta+2}(\Delta v(\tilde{x}_{\ast})-3\tilde{b}_{\ast}^2\rho^{-2}v(\tilde{x}_{\ast}))=\rho^2\left[\rho^{-\beta}\Delta v(\tilde{x}_{\ast})-3\tilde{b}_{\ast}^2\rho^{-\beta-2}v(\tilde{x}_{\ast})\right]
\]
\[
\leq \rho^{2}[\Delta(\rho^{-\beta}v(\tilde{x}_{\ast}))-\tilde{b}_{\ast}^2\rho^{-\beta-2}v(\tilde{x}_{\ast})]\leq -\rho^{-\beta}v(\tilde{x}_{\ast})
\]
where in the second to last step we might have to take $\tilde{b}_{\ast}$ bigger than we had done, this depending on the parameter $\beta$ only. This shows that
\[
\sup_{\tilde{M}_{\eta_1,\eta_2}}\left| \rho^{-\beta} v\right|\leq \left|\rho^{-\beta+2}\left(\Delta v(\tilde{x}_{\ast})-3\tilde{b}_{\ast}^2\rho^{-2}v(\tilde{x}_{\ast})\right)\right|
\]
and hence, to greater extent
\[
\sup_{\tilde{M}_{\eta_1,\eta_2}}\left| \rho^{-\beta} v\right|\leq \sup_{\tilde{M}_{\eta_1,\eta_2}}\left|\rho^{-\beta+2}\left(\Delta v-3\tilde{b}_{\ast}^2\rho^{-2}v\right)\right|.
\]
This proves our key claim and thus the statement of the lemma.
\end{proof}

\subsection{A Picard iteration scheme for the nonlinear problem}\label{subs:Pic}

We have just seen that the constant $B$ can be chosen so that the linearization of the operator $\tilde{\mathcal{M}}(\tilde{\gamma},0)$ for $\tilde{M}_{\eta_1,\eta_2}$ is a linear isomorphism and its injectivity constant does not deteriorate as we let $\eta_1\to 0$ (for fixed $\eta_2$ small). Thus, there is a uniformly continuous solution operator $L^{-1}: X_1\to X_2$ where $X_1:=\mathcal{C}^{q-2,\alpha'}_{\beta-2}$ and $X_2:=\mathcal{C}^{q,\alpha'}_{\beta}$ (coherently, the symbol $\left\|\cdot\right\|_1$ (resp. $\left\|\cdot\right\|_2$) shall stand for the Banach norm in $\mathcal{C}^{q-2,\alpha'}_{\beta-2}$ (resp. $\mathcal{C}^{q,\alpha'}_{\beta}$)).  We shall then approach the solvability of the nonlinear problem
\[
\tilde{\mathcal{M}}(\tilde{\gamma},v)=0
\] 
by means of a Picard iteration scheme. More specifically, we shall write the equation in question as
\[
\tilde{\mathcal{M}}(\tilde{\gamma},0)+\tilde{\mathcal{M}}_u(\tilde{\gamma},0)[v]+\tilde{\mathcal{N}}(\tilde{\gamma},0)(v)=0
\] 
where $\tilde{\mathcal{N}}(\tilde{\gamma},0)$ collects all the terms of $\tilde{\mathcal{M}}(\tilde{\gamma},v)$ that are not linear in $v$, hence in fact at least quadratic. Let us recall that we have conveniently set $Lv=\tilde{\mathcal{M}}_u(\tilde{\gamma},0)[v]$ and similarly, we shall define here $Z(v):=\tilde{\mathcal{N}}(\tilde{\gamma},0)(v)$ The iteration we shall setup is defined by letting:
\[
\begin{cases}
u_0=0 \\
f_0=-\tilde{\mathcal{M}}(\tilde{\gamma},0)
\end{cases}
\]
and hence, recursively,
\[
\begin{cases}
u_{i+1}=L^{-1}(f_i) \\
f_{i+1}=-\tilde{\mathcal{M}}(\tilde{\gamma},0)-Z(u_{i+1}).
\end{cases}
\]
In doing this, we need to make sure that the remainder terms (namely those at least quadratic, represented by $Z$) get smaller and smaller along the iteration, so that the method converges. To that aim, the following statement does suffice.
\begin{proposition} \label{pro:quadratic} Given any $\lambda>0$, there exists $r_0>0$ sufficiently small so that 
	if $\left\|f_1\right\|_1<r_0$ and $\left\|f_2\right\|_1<r_0$ and we let $u_1=L^{-1} f_1$, $u_2=L^{-1} f_2$ then we have
	\[ \left\|Z(u_1)-Z(u_2)\right\|_1\leq \lambda \left\|u_1-u_2\right\|_2.
	\] 
\end{proposition}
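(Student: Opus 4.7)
The structural idea is the mean value inequality: by construction
\[
Z(v) = \tilde{\mathcal{M}}(\tilde{\gamma}, v) - \tilde{\mathcal{M}}(\tilde{\gamma}, 0) - L[v]
\]
vanishes at $v=0$ together with its Fr\'echet derivative, so if we can produce a modulus of continuity
\[
\|DZ(v)\|_{\mathcal{L}(X_2, X_1)} \le \omega(\|v\|_2), \qquad \omega(0^+)=0,
\]
with $\omega$ independent of the gluing parameters $\eta_1,\eta_2$, then
\[
Z(u_1)-Z(u_2)=\Big(\int_0^1 DZ(tu_1+(1-t)u_2)\,dt\Big)(u_1-u_2)
\]
gives the claimed estimate, provided we shrink $r_0$ enough that both $u_1=L^{-1}f_1$ and $u_2=L^{-1}f_2$ lie in a ball on which $\omega<\lambda$. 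This last reduction uses Lemma \ref{lem:Schauder}: the bound $\|L^{-1}\|_{X_1\to X_2}\le C$ holds with $C$ independent of $\eta_1,\eta_2$, so that $\|u_i\|_2\le C\|f_i\|_1 < Cr_0$ and hence $\|tu_1+(1-t)u_2\|_2<Cr_0$ for every $t\in[0,1]$.

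The concrete work, then, is to show that $\tilde{\mathcal{M}}(\tilde\gamma,\cdot)\colon \tilde{\mathcal{C}}\to X_1$ is $\mathcal{C}^1$ with derivative estimates uniform in $\eta_1,\eta_2$. Expanding the normal-graph mean curvature in the standard quasilinear way, $H_{\tilde\gamma}(v)=H_{\tilde\gamma}(0)+J_{\tilde{M}_{\eta_1,\eta_2}}v+\mathcal{Q}(v,\nabla v,\nabla^2 v)$, where $\mathcal{Q}$ is a smooth function vanishing to second order at the origin whose coefficients are polynomials in the ambient metric, the second fundamental form $\tilde{A}_{\eta_1,\eta_2}$ and the ambient Riemann tensor. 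An analogous expansion for the conformal correction $\frac{1}{2}\nabla^{\tilde\gamma}_\nu\log\tilde{Q}_{\eta_1,\eta_2}(v)$ produces smooth-in-$v$ contributions whose linear part has already been absorbed into $L$ through the choice of the constant $B$. Collecting terms, $Z(v)$ is pointwise a polynomial combination of $v,\nabla v,\nabla^2 v$ (of total degree at least two) with coefficients controlled by the intrinsic and extrinsic geometry of $\tilde{M}_{\eta_1,\eta_2}$.

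The key estimates then are bilinear multiplication inequalities in the weighted H\"older spaces: for $\beta>0$ there is a constant $C$, independent of the gluing parameters, such that
\[
\|u\,w\|_{\mathcal{C}^{q-2,\alpha'}_{2\beta-2}} \le C\,\|u\|_{\mathcal{C}^{q,\alpha'}_{\beta}}\,\|w\|_{\mathcal{C}^{q,\alpha'}_{\beta-2}},
\]
and similarly when one or both factors are replaced by derivatives (each derivative lowers the weight by one, so the bookkeeping is consistent). Since $\beta>2$, we have $2\beta-2>\beta-2$ and $\rho$ is uniformly bounded above, yielding a uniform continuous embedding $\mathcal{C}^{q-2,\alpha'}_{2\beta-2}\hookrightarrow \mathcal{C}^{q-2,\alpha'}_{\beta-2}$. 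The coefficients of $\mathcal{Q}$ are uniformly bounded in the relevant weighted norms because $|\tilde{A}_{\eta_1,\eta_2}|\lesssim \rho^{-1}$ (uniformly in $\eta_1,\eta_2$, by the last assertion of Theorem \ref{thm:Hsi}) together with the analogous scale-invariant bounds on higher covariant derivatives, which follow from the smooth-graph convergence of the Hsiang regularisations to the Clifford cone away from the tip and the coarse interpolation performed in Subsection \ref{subs:coarse}. Putting these pieces together, $DZ$ is continuous from $X_2$ into $\mathcal{L}(X_2,X_1)$ with $DZ(0)=0$, and the bounds depend only on $q,\alpha',\beta$ and the universal constants above.

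The main obstacle is the uniformity of Step two: one has to verify that every geometric factor appearing in the Taylor expansion of $\tilde{\mathcal{M}}$ (the metric, the second fundamental form, the ambient curvature, the conformal factor $\tilde{Q}_{\eta_1,\eta_2}$, and their covariant derivatives up to order $q$) is controlled in weighted norms by constants that do not degenerate as $\eta_1\to 0$. This is precisely why the extrinsically weighted norms $\mathcal{C}^{k,\alpha'}_\beta$ on $\tilde{M}_{\eta_1,\eta_2}$ were introduced in Subsection \ref{subs:coarse} and why the Hsiang asymptotics recorded in Theorem \ref{thm:Hsi} are indispensable; granted these, the mean value argument sketched above closes the proof.
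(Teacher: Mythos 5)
Your proposal is correct and is, in substance, the same argument the paper gives: the paper expands $Z$ as a finite sum of at-least-quadratic products $\prod_i\nabla^{\epsilon_i}v$ (citing the mean-curvature expansion of \cite{CM11}) and then applies the elementary identity $ab-cd=a(b-d)+(a-c)d$ together with $\beta>2$ and the $\eta$-uniform Schauder bound of Lemma \ref{lem:Schauder} to peel off one small factor $\lesssim r_0$ and retain one difference factor $\|u_1-u_2\|_2$, while you package exactly this quadratic-vanishing structure into the integral mean-value formula for $DZ$. The content is identical — both hinge on $Z$ vanishing to second order, on the multiplicativity of the extrinsically weighted H\"older norms, and on the uniformity in $\eta_1,\eta_2$ supplied by Theorem \ref{thm:Hsi} — so there is no gap to flag.
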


Before discussing why this has to be the case in our problem, let us show how such control on the $Z$ term implies convergence of the scheme in the space $X_1$. 

  \begin{proposition} \label{pro:picard} Given $f\in X_1$ sufficiently small, there is a small
  	$u\in X_2$ satisfying 
  	\[ Lu+Z(u)=f.
  	\]
  \end{proposition}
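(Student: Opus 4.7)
My plan is to solve the equation $Lu+Z(u)=f$ via the Banach contraction principle; this provides the rigorous justification of the Picard iteration scheme set up in Subsection \ref{subs:Pic}. Since Lemma \ref{lem:Schauder} gives a bounded inverse $L^{-1}\colon X_1\to X_2$, with operator norm $\|L^{-1}\|\leq C$ uniformly in the gluing parameters $\eta_1,\eta_2$, the problem is equivalent to the fixed-point equation $u=T(u):=L^{-1}(f-Z(u))$.

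First I would fix $\lambda>0$ so small that $C\lambda\leq 1/2$ and then invoke Proposition \ref{pro:quadratic} to produce a threshold $r_0>0$ such that
\[
\|Z(w_1)-Z(w_2)\|_1\leq \lambda\|w_1-w_2\|_2
\]
for all $w_1,w_2$ in a ball of radius comparable to $r_0$ in $X_2$ (this ball being the $L^{-1}$-image of the corresponding $r_0$-ball in $X_1$). Since $Z$ collects only terms at least quadratic in its argument, we have $Z(0)=0$, hence for any $w$ in that ball
\[
\|Z(w)\|_1=\|Z(w)-Z(0)\|_1\leq \lambda\|w\|_2.
\]

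Next I would verify that $T$ self-maps a suitable closed ball $\overline{B_r}\subset X_2$. For $w\in \overline{B_r}$ with $r$ chosen comparable to $r_0$,
\[
\|T(w)\|_2\leq C\|f\|_1+C\|Z(w)\|_1\leq C\|f\|_1+C\lambda\|w\|_2\leq C\|f\|_1+\tfrac{1}{2}r,
\]
so imposing $\|f\|_1\leq r/(2C)$ forces $T(\overline{B_r})\subset\overline{B_r}$. The same quadratic estimate yields contractivity:
\[
\|T(w_1)-T(w_2)\|_2\leq C\|Z(w_1)-Z(w_2)\|_1\leq C\lambda\|w_1-w_2\|_2\leq \tfrac{1}{2}\|w_1-w_2\|_2,
\]
so Banach's theorem produces a unique $u\in\overline{B_r}$ with $Lu+Z(u)=f$; this $u$ is precisely the limit of the iterates $u_{i+1}=T(u_i)$ starting from $u_0=0$, as in Subsection \ref{subs:Pic}.

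There is no serious obstacle here once Lemma \ref{lem:Schauder} and Proposition \ref{pro:quadratic} are in hand; what is essential is that both the operator-norm bound $C$ and the threshold $r_0$ are uniform in $\eta_1,\eta_2$, which in turn makes the smallness requirement on $\|f\|_1$ independent of the gluing parameters. This uniformity is what matters downstream: applied with $f=-\tilde{\mathcal{M}}(\tilde{\gamma},0)$, whose $X_1$-norm will be shown to vanish as $\eta_1\to 0$, the proposition will provide, for all sufficiently small $\eta_1$, a genuine minimal perturbation of $\tilde{M}_{\eta_1,\eta_2}$ inside the conformal class $[\gamma]$, thereby producing the desired desingularizations of the perturbed Clifford football.
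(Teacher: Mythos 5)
Your proof is correct and is essentially the paper's argument in disguise: the paper's explicit Picard iteration $u_{i+1}=L^{-1}(f-Z(u_i))$ is precisely the iteration of your map $T$, and the paper's inductive verification that $\|f_i\|_1<r_0$ together with the geometric-series Cauchy estimate is just the Banach fixed-point proof unrolled. Both rely in exactly the same way on Lemma \ref{lem:Schauder} for the uniform bound on $L^{-1}$ and on Proposition \ref{pro:quadratic} for the Lipschitz-small estimate on $Z$, so there is no substantive difference beyond your cleaner packaging via the contraction mapping theorem.
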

  
  \begin{proof} Assume that $\|f\|_{1}<\delta_0$ (with $\delta_{0}$ a small constant to be fixed later in the proof), let $u_0=0$ and $f_0=f$, and we
  	inductively construct sequences $f_i$ and $u_i$ for $i\geq 1$ such that
  	\[ Lu_i=f_{i-1}\ \mbox{where}\ f_i=-Z(u_{i})+f.
  	\] 
  	For $i\geq 1$ we have
  	\[ L(u_{i+1}-u_i)=f_{i}-f_{i-1}=Z(u_{i-1})-Z(u_i),
  	\]
  	and so by Proposition \ref{pro:quadratic} we have
  	\[ \|f_{i+1}-f_i\|_1=\|Z(u_{i+1})-Z(u_{i})\|_1\leq \lambda\|u_{i+1}-u_{i}\|_2\leq C\lambda\|f_i-f_{i-1}\|_1
  	\]
  	where $\lambda$ can be chosen as small as we wish and $C$ is the continuity constant of the solution operator $L^{-1}$ (this can be chosen uniformly thanks to Lemma \ref{lem:Schauder}). Let then $r_0$ be small enough so that
  	$C\lambda<1/2$ in Proposition \ref{pro:quadratic}. We may then iterate this scheme provided that
  	$\|f_i\|_1\leq r_0$ for $i=1,\ldots, k$ and in that case we obtain 
  	\[ \|f_{k+1}-f_k\|_1\leq 2^{-k-1}\|f_1-f_0\|=2^{-k-1}\|f\|_1< 2^{-k-1}\delta_0
  	\]
  	From the triangle inequality we then have for any $k$
  	\[ \|f_{k+1}-f\|_1\leq \sum_{i=1}^{k+1}2^{-i}\delta_0<2\delta_0,
  	\]
  	so if we choose $\delta_0=r_0/4$ we have
  	\[ \|f_{k+1}\|_1\leq \|f_{k+1}-f\|_1+\|f\|_1<3\delta_0<r_0
  	\]
  	for each $k$. We can then iterate indefinitely and the sequence $\{f_i\}$ is Cauchy
  	as is $\{u_i\}$ since $L^{-1}$ is a bounded operator. As a consequence, the sequence $\{u_i\}$ converges in $X_{2}$ to a limit $u$ which satisfies
  	the equation $Lu+Z(u)=f$. This completes the proof.
  \end{proof}
  
  At this stage, we shall outline the proof of Proposition \ref{pro:quadratic}.
  
  \begin{proof}
Thanks to the computation presented (for instance) in section 7.1 of \cite{CM11}, where the local expression of the mean curvature map for an hypersurface in a Riemannian manifold is derived, we know that the value of $Z(v)$ will be bounded from above by a finite sum of terms of the form $\prod_{i\in I} \nabla^{\epsilon_i}v$ where $\epsilon_i$ represents differentiation of order $|\epsilon_i|$ under the constraint that $|\epsilon_i|\leq 2$ for each value of the index $i\in I$.  Thus in order to estimate
\[
\sup_{\tilde{M}_{\eta_1,\eta_2}}\rho^{-\beta+2}|Z(u_1)-Z(u_2)|
\] 
it is in fact enough to show that
\[
\sup_{\tilde{M}_{\eta_1,\eta_2}} \rho^{-\beta+2}|\nabla^{\epsilon'}u_1\nabla^{\epsilon''}u_1-\nabla^{\epsilon'}u_2\nabla^{\epsilon''}u_2|\leq C r_0 \left\|u_1-u_2 \right\|_{X_2}.
\]
This is indeed the case, for the triangle inequality gives
\[
 \rho^{-\beta+2}(x)|\nabla^{\epsilon'}u_1(x)\nabla^{\epsilon''}u_1(x)-\nabla^{\epsilon'}u_2(x)\nabla^{\epsilon''}u_2(x)| 
 \]
 \[
 \leq  \rho^{-\beta+2}(x)|\nabla^{\epsilon'}u_1(x)\nabla^{\epsilon''}u_1(x)-\nabla^{\epsilon'}u_1(x)\nabla^{\epsilon''}u_2(x)|+\rho^{-\beta+2}(x)|\nabla^{\epsilon'}u_1(x)\nabla^{\epsilon''}u_2(x)-\nabla^{\epsilon'}u_2(x)\nabla^{\epsilon''}u_2(x)| \]
 \[
\leq  \rho^{-|\epsilon''|+2}(x)|\nabla^{\epsilon'}u_1(x)|\rho^{-\beta+|\epsilon''|}(x)|\nabla^{\epsilon''}u_1(x)-\nabla^{\epsilon''}u_2(x)|
\]
\[
+\rho^{-|\epsilon'|+2}(x)|\nabla^{\epsilon''}u_2(x)|\rho^{-\beta+|\epsilon'|}(x)|\nabla^{\epsilon'}u_1(x)-\nabla^{\epsilon'}u_2(x)|
\]
and hence, taking the supremum over $x\in\tilde{M}_{\eta_1,\eta_2}$ the previous inequality yields
\[
\sup_{\tilde{M}_{\eta_1,\eta_2}} \rho^{-\beta+2}|\nabla^{\epsilon'}u_1\nabla^{\epsilon''}u_1-\nabla^{\epsilon'}u_2\nabla^{\epsilon''}u_2|
\]
\[
\leq \left\{\sup_{x\in \tilde{M}_{\eta_1,\eta_2}}\rho^{-|\epsilon''|+2}(x)|\nabla^{\epsilon'}u_1(x)|+\rho^{-|\epsilon'|+2}(x)|\nabla^{\epsilon''}u_2(x)| \right\} \left\|u_1-u_2 \right\|_{X_2}.
\]
But then since $\beta> 1+\alpha'+q> 2$ (which we had assumed since the very definition of the spaces $X_1$ and $X_2$) patently
\[
\sup_{x\in \tilde{M}_{\eta_1,\eta_2}}\rho^{-|\epsilon''|+2}(x)|\nabla^{\epsilon'}u_1(x)|\leq \sup_{x\in \tilde{M}_{\eta_1,\eta_2}}\rho^{-\beta+|\epsilon'|}(x)|\nabla^{\epsilon'}u_1(x)|\leq C r_0
\]
as well as
\[
\sup_{x\in \tilde{M}_{\eta_1,\eta_2}}\rho^{-|\epsilon'|+2}(x)|\nabla^{\epsilon''}u_2(x)|\leq \sup_{x\in \tilde{M}_{\eta_1,\eta_2}}\rho^{-\beta+|\epsilon''|}(x)|\nabla^{\epsilon''}u_2(x)|\leq C r_0
\]
so that, in the end
\[
\sup_{\tilde{M}_{\eta_1,\eta_2}} \rho^{-\beta+2}|\nabla^{\epsilon'}u_1\nabla^{\epsilon''}u_1-\nabla^{\epsilon'}u_2\nabla^{\epsilon''}u_2|\leq 2 C r_0 \left\|u_1-u_2 \right\|_{X_2}.
\]
The estimate for the covariant derivatives and for the term
\[
\sup_{x\neq y \in \tilde{M}_{\eta_1,\eta_2}}\frac{|\rho^{-\beta+2+\alpha'}(x)\nabla^{q-2}(Z(u_1)-Z(u_2)(x))-\rho^{-\beta+2+\alpha'}(y)\nabla^{q-2}(Z(u_1)-Z(u_2))(y)|}{d(x,y)^{\alpha'}}
\]
follows along similar lines, the latter just by exploiting the inequality
\[ [f_1f_2]_{\alpha'}\leq \left(\sup_{\tilde{M}_{\eta_1,\eta_2}}|f_1|\right)[f_2]_{\alpha'}+\left(\sup_{\tilde{M}_{\eta_1,\eta_2}}|f_2|\right)[f_1]_{\alpha'}.
\]
Adding up the two terms, we end up proving an inequality of the form 
  	\[ \left\|Z(u_1)-Z(u_2)\right\|_1\leq C r_0\left\|u_1-u_2\right\|_2.
  	\] 
 where $C$ is a constant that only depends on the background metric, and can be chosen uniformly in $\mathcal{V}=\Xi(\mathcal{U})$ ($\mathcal{U}$ being a small neighborhood of the round metric $\gamma_0$ on $S^4$). Therefore, by simply letting (for such a constant) $r_0=\lambda/C$ the proof is complete. 	
  	\end{proof}

\subsection{Proof of Theorem \ref{thm:main}}

In this subsection, we collect and make use of all the intermediate results that have been obtained in the article in order to give a direct proof of Theorem \ref{thm:main}. 

\begin{proof}
	
For a given couple of antipodal points of the four-sphere (which, without loss of generality, we shall assume to be the north and the south poles) let $\mathcal{O}=\cup_O\mathcal{O}(O)$ be the union of the $\mathcal{C}^{q+1,\alpha}$-Riemannian metrics on $S^4$ whose Weyl tensor vanishes on some open set $O$ containing those points, as $O$ varies. We claim that one can reduce to the smaller class $\mathcal{O}'$ of those metrics that are round on some open $O$ containing the poles, as $O$ varies. Indeed, if $\gamma\in\mathcal{O}(O)$ we know by Theorem 1.165 in \cite{Bes87} that $(S^4,\gamma)$ is conformally flat in $O$ or equivalently, by means of the stereographic projection, it is pointwise conformally equivalent to $(S^4,\gamma_0)$ on that neighborhood. That is to say that there exists a conformal factor $f=f(\gamma)\in\mathcal{C}^{q+1,\alpha}$ such that the metric $f^2\gamma$ coincides with $\gamma_0$ around the poles. In particular, if $\gamma$ is assumed to be $\mathcal{C}^{q+1,\alpha}$-close to $\gamma_0$ we will deduce that $f$ is $\mathcal{C}^{q+1,\alpha}$ close to the constant function equal to $1$. Therefore, we can apply Theorem \ref{pro:deform} \textsl{after} this preliminary step, namely after replacing each such $\gamma$ by the corresponding metric $f^2\gamma$. For notational convenience, let us simply rename $f^2\gamma$ to $\gamma$ from now onwards. Using the notation of that statement, we are then given a neighborhood $\mathcal{U}$ of Riemannian metrics about $\gamma_0$ with the property that if $[\gamma]\in\Pi(\mathcal{U})$ then $\Omega(\gamma)$ defines a singular minimal submanifold in $(S^4,\Xi(\gamma))$. This is true, as a special case, for the class of metrics $\mathcal{U}\cap\mathcal{O}'$. Now, the metric $\Xi(\gamma)$ is not exactly round around the poles anymore, but the conformal correction introduced by the map $\Xi$ satisfies the estimate $|1-Q(\Omega(\gamma))|\leq\rho^{q}$
which ensures that $\Xi(\gamma)$ agrees with the round metric at the poles up to order $q-1$. This being remarked, for any such metric $\Xi(\gamma)$ we perform the coarse interpolation described in Subsection \ref{subs:coarse}, thereby getting a closed submanifold $\tilde{M}_{\eta_1,\eta_2}$. By the well-known formula for the conformal change of the mean curvature, we know that 
\begin{equation}\label{eq:est1}
|\tilde{H}_{\eta_1,\eta_2}|\leq C \rho^{q-1} \ \textrm{in} \ \ B_{\eta_2/2} 
\end{equation}
and with little effort one can also check that
\begin{equation}\label{eq:est2}
|\tilde{H}_{\eta_1,\eta_2}|\leq C \left(h_{\eta_2}(\eta_1)+\rho^{q-1}\right) \ \textrm{in} \ \ B_{\eta_2}\setminus B_{\eta_2/2} 
\end{equation}
where $h_{\eta_2}(\cdot)$ is the modulus of continuity which encodes the rate of convergenece of the Hsiang hyperspheres to the Clifford football on approach to the singularities thereof.
Here $C$ is a constant that is independent of $\eta_1, \eta_2$ for any $\eta_2$ small enough. Now, we want to solve in $u$ the nonlinear problem
\[
\tilde{\mathcal{M}}(\tilde{\gamma},u)=0
\]
for $\tilde{\gamma}=\Xi(\gamma)$. In order to do so, we proceed by means of a Picard iteration scheme as described in Subsection \ref{subs:Pic}. Specifically, we first consider Proposition \ref{pro:quadratic} for (say) $\lambda=1/2$: given the corresponding $r_0=r_0(\lambda)$ we fix, once and for all, the parameter $\eta_2$ in a way that the norm $\left\|\tilde{H}_{\eta_1,\eta_2}\right\|_{\mathcal{C}^{q-2,\alpha'}_{q-2}}<r_0$ at least for $\eta_1$ small enough. In doing so, we first choose $\eta_2$ so that this is the case in $B_{\eta_2/2}$ and then find $\overline{\eta_1}$ so that for $\eta_1<\overline{\eta_1}$ the needed estimate is satified in $B_{\eta_2}\setminus B_{\eta_2/2}$ (Allard's regularity theorem ensures that we can gain estimates on higher and higher covariant derivatives of the mean curvature, the only constraint being the regularity of the ambient metric).
Once these choices are made, we can proceed with the iteration and Proposition \ref{pro:picard} ensures the convergence of the method. Thus for any such $\tilde{\gamma}=\Xi(\gamma)$ we have constructed a normal graph $M_{\eta_1}(\gamma)$ over $\tilde{M}_{\eta_1,\eta_2}$ that is minimal in metric $\overline{\Xi}_{\eta_1}(\gamma)$. By construction, the conformal factors we introduce in this last step are uniformly bounded in $\mathcal{C}^{q,\alpha'}$ and thus in fact the whole construction happens in a small $\mathcal{C}^{q,\alpha'}$ neighborhood of $\gamma_0$. Now, for $\alpha$ as in the statement of Theorem \ref{thm:main}, we can assume that (since the very beginning) Theorem \ref{pro:deform} had been applied for some $\alpha'>\alpha$. Hence, for any fixed $\gamma$ Arzel\'a-Ascoli ensures that we can extract a subsequence of indices $\eta_1$ (which we shall not rename) so that the corresponding conformal factors $\overline{\Xi}_{\eta_1}(\gamma)/\gamma$ converge in $\mathcal{C}^{q,\alpha}$ to a Riemannian metric $\overline{\Xi}_{\infty}(\gamma)$. The associated minimal embedded hypersurfaces $M_{\eta_1}$ (which are of course hyperspheres, since gotten by taking the connected sum of two handle-boldies near each of the two poles) converge to a minimal varifold $\boldsymbol{V}_{\infty}$ in $S^4$ which must have the north and south poles in its support. If the family $\left\{M_{\eta_1}\right\}$ had a uniforml bound on the Morse index, then by the compactness theorem of \cite{Sha15} (specifically: by Corollary A.7\footnote{The result in question is stated under the assumption of smooth convergence of the background metrics, but (as pointed out by the author) in fact $\mathcal{C}^{3,\alpha}$ convergence would suffice. The key tools for those compactness theorems are contained in the work by Schoen-Simon \cite{SS81} which deals with $\mathcal{C}^3$ hypersurfaces.}) $\boldsymbol{V}_{\infty}$ should be a smooth minimal hypersphere in $(S^4,\overline{\Xi}_{\infty}(\gamma))$ which patently contradicts the fact that the tangent varifolds $T_{p_1}\boldsymbol{V}_{\infty}$ and $T_{p_2}\boldsymbol{V}_{\infty}$ are not hyperplanes. As a result, the embedded minimal hyperspheres $\left\{M_{\eta_1}\right\}$, whose volume is bounded by (say) $2\mathscr{H}^{3}(M,\gamma_0)$ have arbitrarily large Morse indices. 
	\end{proof}	
	
\subsection{Proof of Theorem \ref{thm:main2}}

\begin{proof}
Without loss of generality, let us deal with the class of nearly-round Riemannian metrics in $\Gamma(q+1,\alpha)$ whose curvature tensor coincides with that of $\gamma_0$ up to (and including) order $q-3$ at the north and south pole of the four-sphere. This implies (see, for instance, \cite{Wil93} pp. 90-92) that the expansion of the metric $\gamma$ in geodesic normal coordinates around each of those poles reads 
\[
\gamma_{ij}(w)={\gamma_0}_{ij}(w)+r_{ij}(w), \ \textrm{for} \ r_{ij}(w)=O(|w|^{q})
\]
and this is enough to ensure the validity of the smallness estimates \eqref{eq:est1}, \eqref{eq:est2} for the mean curvature of $\tilde{M}_{\eta_1,\eta_2}$ in $B_{\eta_2}$. The rest of the proof follows closely that of Theorem \ref{thm:main}.
\end{proof}

\section{Concluding remarks}\label{sec:concl}

We shall conclude this article with three remarks:

\begin{enumerate}

\item{Our results, specifically Theorem \ref{thm:main}, Theorem \ref{thm:main2} and Theorem \ref{pro:deform} have been stated and proved for $S^4$ for the sake of definiteness, but do have a rather straighforward extension to the six-dimensional sphere. In fact, our approach seems flexible enough to be easily adapted to all dimensions for which Hsiang desingularizations exist, namely all dealt with in the trilogy \cite{Hsi83a, Hsi83b, HS86}. The necessary changes should mostly be of notational character.}

\item{The general principle that lies behind our construction, that is \textsl{deforming and desingularizing a minimal submanifold} has been successfully developed in the category of special Lagrangian submanifolds, see the works by Joyce (specifically \cite{Joy03} and references therein) and Pacini \cite{Pac13b, Pac13c}. However, the case of (general) minimal submanifolds is much different, as is witnessed by a comparison (even in the smooth setting) of the results of the deformation theory of \cite{Whi91} with those of \cite{McL98}. The problem of developing a perturbation theory for minimal submanifolds with isolated singularities, which shall be effectively applicable to \textsl{some cases of natural geometric interest} seems rather hard. Using the methods of Section \ref{sec:deform} based on computing indicial roots in suitably weighted Sobolev spaces, one can see that the Jacobi operator of the Clifford football $J_{M}:\mathcal{W}^{k,2}_{\boldsymbol{\beta}}\to \mathcal{W}^{k-2,2}_{\boldsymbol{\beta}-2}$ has Fredholm index equal to -18 for any $k\geq 2$ and $\boldsymbol{\beta}=(\beta,\beta), \ \textrm{for} \ \beta>1$. This implies that the natural Lyapunov-Schmidt reduction (see, for instance, chapter 2 of \cite{AM06}) is doomed to fail, for there are only 16 geometric degrees of freedom (corresponding to moving the singularities by local isometries and acting on the horizontal Clifford tori). The two extra elements in the cokernel of $J_M$ obstructing the deformation problem for the Clifford football correspond to \textsl{regularizing modes} associated to the presence of the Hsiang desingularizations themselves.}
\item{Therefore, it is natural to pose the following open problems:
	\begin{enumerate}
	\item{Is it true that for any Riemannian metric $\gamma$ in a suitably small neighbourhood of $\gamma_0$ one can find a perturbed Clifford football in $(S^4,\gamma)$?}
	\item{Are there examples of Riemannian manifolds $(M^n,\gamma)$ that are not round spheres and yet contain infinitely many embedded minimal (closed) hypersurfaces of fixed topology, bounded volume and arbitrarily large Morse index?}
	\item{Is it true that any Riemannian metric $\gamma$ in a suitably small neighbourhood of $\gamma_0$ the Riemannian manifold $(S^4,\gamma)$ contains infinitely many embedded minimal hyperspheres? (If that were the case, this would be in striking contrast with the conclusion of Theorem 4.5 in \cite{Whi91}, where examples are given of almost-round metrics on $S^3$ for which there are exactly four minimal two-spheres).}
	\end{enumerate}	These are truly fascinating questions and we certainly expect them to generate an impressive amount of interesting research for many years to come.}
     
 \end{enumerate}   
     
\bibliographystyle{plain}

\end{document}